\newtheorem{theorem}{Theorem}[section]
\newtheorem{definition}{Definition}[section]
\newtheorem{lemma}{Lemma}[section]
\newtheorem{corollary}{Corollary}[section]
\newtheorem{assumption}{Assumption}
\newtheorem{remark}{Remark}[section]
\newtheorem{proposition}{Proposition}[section]
   \newtheoremstyle{example}{\topsep}{\topsep}%
     {}
     {}
     {\bfseries}
     {}
     {\newline}
     {\thmname{#1}\thmnumber{ #2}\thmnote{ #3}}
   \theoremstyle{example}
\newcounter{example}[section]
\newenvironment{example}[1][]{\refstepcounter{example}\par\medskip
   \noindent \textbf{Example~\theexample. #1} \rmfamily}{\medskip}
\newcommand{\E}{\mathbb{E}}
\newcommand{\C}{\mathcal{C}}
\newcommand{\F}{\mathcal{F}}
\newcommand{\N}{\mathbb{N}}
\newcommand{\Prob}{\mathbb{P}}
\newcommand{\Poly}{\mathcal{P}}
\newcommand{\R}{\mathbb{R}}
\newcommand{\Nu}{\mathcal{V}}
\newcommand{\Tau}{\mathcal{T}}
\newcommand{\NN}{\mathtt{N}}
\newcommand{\X}{\mathbf{X}}
\newcommand{\PP}{\mathbf{P}}
\numberwithin{equation}{section}
\author{P. Patie}\thanks{The authors are grateful to M.Savov for discussion related to long-tailed distributions.}
\address{School of Operations Research and Information Engineering, Cornell University, Ithaca, NY 14853.}
\email{	pp396@orie.cornell.edu}
\author{A. Srapionyan}
\address{Center for Applied Mathematics, Cornell University, Ithaca, NY 14853.}
\email{	as3348@cornell.edu}
\title{Spectral projections correlation structure for short-to-long range dependent processes}
\begin{document}
\begin{abstract}
Let $\X=(\X_t)_{t \geq 0}$ be a stochastic process issued from $x \in \R$ that admits a marginal stationary measure $\nu$, i.e.~$\nu \PP_t f = \nu f$ for all $t \geq 0$, where $\PP_t f(x)= \E_x[f(\X_t)]$. In this paper we introduce the (resp.~biorthogonal) spectral projections correlation functions which are expressed in terms of projections into the eigenspaces of $\PP_t$ (resp.~and of its adjoint in the weighted Hilbert space $L^2(\nu)$). We obtain closed-form expressions involving eigenvalues, the condition number and/or the angle between the projections in the following different situations: when $\X=X$ with $X=(X_t)_{t \geq 0}$ being a Markov process, $\X$ is the subordination of $X$ in the sense of Bochner, and $\X$ is a non-Markovian process which is obtained by time-changing $X$ with an inverse of a subordinator. It turns out that these spectral projections correlation functions have different expressions with respect to these classes of processes which enables to identify substantial and deep properties about their dynamics. This interesting fact can be used to design original statistical tests to make inferences, for example, about the path properties of the process (presence of jumps), distance from symmetry (self-adjoint or non-self-adjoint) and short-to-long-range dependence. To reveal the usefulness of our results, we apply them to a class of non-self-adjoint Markov semigroups studied in~\cite{patie2015spectral}, and then time-change by subordinators and their inverses.
\end{abstract}


\maketitle
\section{Introduction} \label{intro}
Stochastic processes play an important role in the investigation of random phenomena depending on time. When using a stochastic process for modeling or for statistical testing purposes, one should take into account its special features which indicate how well the process reflects the reality. Some of the most essential features include (but are not limited to) observing whether the process is Markovian or not, whether its trajectories are continuous or incorporate jumps, what type of range dependence it exhibits, and how far it is from symmetry (self-adjointness).

With the objective in mind, we  introduce the concept of (biorthogonal) spectral projections correlation functions, see Definition~\ref{definition} below. We proceed by computing explicitly these functions along with their large time asymptotic behavior  for three classes of processes, namely Markov processes, Markov processes subordinated in the sense of Bochner and non-Markovian processes which are obtained by time-changing a Markov process with an inverse of a subordinator. These findings  enable us to provide a unified and original framework for designing statistical tests that  investigates critical properties of a stochastic process including the one described above. Indeed, in these three scenarios the (biorthogonal) spectral projections correlation functions have different expressions, involving some quantities characterizing the process, such as their eigenvalues with their associated condition number or the angle between the spectral projections.

We indicate that the recent years have witnessed the ubiquity of such non-Markovian dynamics in relation to the fractional Cauchy problem, see e.g.~ \cite{toaldo, orshingher2018, hairer_etal}, and, also due to their central role in diverse physical applications within the field of anomalous diffusion, see e.g.~\cite{meerschaert_sikorskii}, as well as for neuronal models for which their long range dependence feature is attractive, see e.g.~\cite{levakova2015review}. We also mention that Leonenko et~al.~\cite{leonenko2013correlation} and Mijena and Nane~\cite{mijena2014corr} investigate the orthogonal spectral projections correlation structure in the framework of Pearson diffusions, i.e.~diffusions with polynomial  coefficients. More specifically, in~\cite{leonenko2013correlation}, the authors discuss the case when a Pearson diffusion is time-changed by an inverse of an $\alpha$-stable subordinator, $0 <\alpha<1$. Whereas the authors of ~\cite{mijena2014corr} consider a Pearson diffusion time-changed by an inverse of a linear combination of independent $\alpha$- and $\beta$-stable subordinators, $0<\alpha,\beta<1$. In this work, we start with a general Markov process that admits an invariant measure with its associated semigroup not necessarily being self-adjoint and local, and then we perform a time-change with general subordinators and their inverses.

Finally, we emphasize that the notion of long-range dependence, also known as long memory, of stochastic processes has been and it is still a center of great interests in probability theory and its applications in the last decades. We refer for thorough and historical account of this concept to the recent monograph of Samorodnitsky~\cite{samorodnitsky2016}. The definitions of long-range dependence based on the second-order properties of a stationary stochastic process such as asymptotic behavior of covariances, spectral density, and variances of partial sums are among the most developed ones appearing in literature. These second-order properties are conceptually relatively simple and easy to estimate from the data. By far the most popular point of view on range dependence is through the rate of decay of covariance or correlation functions. Conceptually, short memory corresponds to a sufficiently fast rate of decay of the correlation (covariance) function as geometric decay, and long-range dependence corresponds to a sufficiently slow rate of decay of the correlation (covariance) function as power decay.

\subsection{Preliminaries}\label{sec:prelim}
Let $\X=(\X_t)_{t \geq 0}$ be a stochastic process defined on a sample filtered probability space $(\Omega, \F, (\F_t)_{t\geq0}, \Prob)$ and state space $E \subseteq \R$, endowed with a sigma-algebra $\mathcal{E}$. Let its associated family of linear operators $\PP=(\PP_t)_{t \geq 0}$ defined, for any $t \geq 0$ and $f \in \mathcal{B}_b(E)$, the space of bounded Borelian functions on $E$, by
\[ \PP_t f(x) = \E_x[f(\X_t)], \]
where $\E_x$ stands for the expectation operator with respect to $\Prob_x(\X_0 = x) =1$. Since $x \mapsto \E_x$ is $\mathcal{E}$-measurable, for any Radon measure $\nu$, we use the notation
\[\nu \PP_t f = \E_{\nu}[f(\X_t)] = \int_E \E_x[f(\X_t)]\nu(dx). \]
We say that a Radon measure $\nu$ on $E$ is a \emph{marginal stationary measure}, if for all $t \geq 0$,
\begin{equation}\label{eq:inv_mes}
\nu \PP_t f = \nu f.
\end{equation}
Note that if $\X$ is a Markov process and \eqref{eq:inv_mes} holds, we say that $\nu$ is an \emph{invariant measure}.
Then, since $\nu$ is non-negative on $E$, we define the weighted Hilbert space
\[L^2(\nu) = \lbrace f: E \rightarrow \R \text{ measurable}; \int_E f^2(x)\nu(dx) <\infty  \rbrace, \]
endowed with the inner product $\langle \cdot,\cdot \rangle_{\nu}$, where
$\langle f,g \rangle_{\nu}=\int_0^{\infty} f(x)g(x)\nu(dx)$, and norm $\Vert f \Vert_{\nu} = \sqrt{\langle f, f \rangle}_{\nu}$. Next, the operators $\PP_t$, $t \geq 0$ being linear, positive and with total mass $\PP_t \mathbbm{1} = \mathbbm{1}$ with $\mathbbm{1}$ being the identity function on the appropriate space, we have, by Jensen's inequality, for any $f \in C_0(E) \subseteq \mathcal{B}_b(E)$ where $C_0(E)$ is the set of continuous functions on $E$ vanishing at infinity,
\begin{equation*}
 \Vert \PP_t f \Vert_{\nu}^2 = \int_E(\PP_t f)^2(x)\nu(dx) \le \int_E \PP_t f^2(x) \nu(dx) = \nu f^2.
\end{equation*}
Thus, the Hahn-Banach theorem yields that we can extend $\PP_t$ as a contraction of $L^2(\nu)$.
From now on, when there is no confusion, we denote by $\PP_t$ its extension to $L^2(\nu)$.
Now, let $\PP^*=(\PP^*_t)_{t \geq 0}$ be the adjoint of $\PP$ in $L^2(\nu)$, i.e.~for any $t \geq 0$  and $f, g \in L^2(\nu)$,
\begin{equation}\label{eq:adjoint}
\langle \PP_t f, g \rangle_{\nu} = \langle f, \PP^*_t g \rangle_{\nu}.
\end{equation}
We are now ready to state the following hypothesis.
\begin{assumption}\label{assmp1}
Let $\mathtt{N}\subseteq \N$ be a finite or a countable set, and for any $t \geq 0$, $(\Poly_n)_{n \in \mathtt{N}}$ (resp.~$(\Nu_n)_{n \in \mathtt{N}}$) be a set of eigenfunctions of $\PP_t$ (resp.~$\PP_t^*$) in $L^2(\nu)$ in the sense that there exist distinct $(\boldsymbol{\lambda}_n)_{n \in \mathtt{N}} \in \R_+$ such that for any $n \in \NN$ and $t \geq 0$, we have
\begin{eqnarray}
\PP_t \Poly_n &=& e^{-\boldsymbol{\lambda}_n t}\Poly_n,\label{eq:eigen} \\
\PP_t^* \Nu_n &=& e^{-\boldsymbol{\lambda}_n t}\Nu_n. \label{eq:coeigen}
\end{eqnarray}
\end{assumption}
We may also find convenient to characterize the $\Nu_n$'s by duality using \eqref{eq:adjoint}, i.e.~$\langle \PP_t f, \Nu_n \rangle_{\nu} = e^{-\boldsymbol{\lambda}_n t }\langle f, \Nu_n \rangle_{\nu}$, for all $f \in L^2(\nu)$. Note that the assumption on $(\boldsymbol{\lambda}_n)_{n \in \mathtt{N}}$ being of multiplicity $1$ is in fact for sake of simplicity since we mean to consider only one of the eigenfunctions in the eigenspace associated to each eigenvalue.\\
Next, without loss of generality, we assume that for any $n \in \mathtt{N}$,
\[ \langle \Poly_n, \Nu_n \rangle_{\nu} = 1 .\]
Indeed, if $\langle \Poly_n, \Nu_n \rangle_{\nu} = a_n \neq 0$ for $n \in \NN$, then we could consider the sequences $\bar{\Poly}_n = \frac{\Poly_n}{\sqrt{|a_n|}}$ and $\bar{\Nu}_n = \frac{\Nu_n}{\sqrt{|a_n|}}$, for which, obviously, we have $\langle \bar{\Poly}_n, \bar{\Nu}_n \rangle_{\nu} = 1$ for $n \in \NN$. We also note that the condition $\langle \Poly_n, \Nu_n \rangle_{\nu} = 1$ does not constrain the norms of the sequences $(\Poly_n)_{n \in \mathtt{N}}$ and $(\Nu_n)_{n \in \mathtt{N}}$ to be $1$, but it only follows from Cauchy-Schwartz inequality that, for any $n \in \mathtt{N}$,
\begin{equation*}
1 = \left| \langle \Poly_n, \Nu_n \rangle_{\nu} \right| \le \Vert \Poly_n \Vert_{\nu} \Vert \Nu_n \Vert_{\nu}.
\end{equation*}
In Lemma \ref{lemma:biorth} below, we shall show that $(\Poly_n,\Nu_n)_{n \in \NN}$ form a biorthogonal sequence in $L^2(\nu)$, i.e.~ $\langle \Poly_m, \Nu_n \rangle_{\nu} = \delta_{mn}$, where $\delta_{mn}$ is the Kronecker symbol defined in \eqref{eq:delta}. In particular, if $\PP$ is self-adjoint in $L^2(\nu)$, i.e.~for all $t \geq 0$, $\PP_t = \PP^*_t$, we have $\Poly_n = \Nu_n$ for $n \in \mathtt{N}$, and $(\Poly_n)_{n \in \mathtt{N}}$ form an orthonormal sequence in $L^2(\nu)$. Below we consider $\X$ to belong to one of the following three families of stochastic processes.

\subsubsection{Markov process}\label{subsec:Markov}
First, let $\X = X$ with $X=(X_t)_{t \geq 0}$ a Markov process defined on a filtered probability space $(\Omega, \F, (\F_t)_{t\geq0}, \Prob)$. We endow the state space $E$ with a sigma-algebra $\mathcal{E}$. Let its associated semigroup be the family of linear operators $\PP = P =(P_t)_{t \geq 0}$ defined, for any $t \geq 0$ and $f \in \mathcal{B}_b(E)$, by
\[ P_t f(x) = \E_x[f(X_t)]. \]
Next, we assume that for $t \geq 0$ and $f \in \mathcal{B}_b(E)$ the mapping $t \mapsto P_t f$ is continuous (this is equivalent to the stochastic continuity property of the process $X$), and the semigroup $P$ admits an invariant probability measure $\nu$, i.e.~$\nu P_t f = \nu f$. In such framework, a classical result states that the semigroup $P$ can be extended to a strongly continuous contraction semigroup in $L^2(\nu)$, see e.g.~Da Prato ~\cite{da2006introduction}, and by an abuse of notation, we still denote its extension to $L^2(\nu)$ by $P$. Note that the adjoint of $P$ in $L^2(\nu)$,  $P^*$ is the semigroup of a stochastic process which may not be necessarily a strong Markov one, but instead has the moderate Markov property, see e.g.~Chung and Walsh~\cite[Chapter 13]{chung_walsh} for more details.

\subsubsection{Bochner subordination}\label{subsec:Bochner}
In Section \ref{section:main_results} below, we also study the spectral projections correlation structure of subordinated Markov processes. \emph{Bochner subordination} is a transformation of a Markov process to a new one through random time change by an independent subordinator, i.e.~a real-valued L\'evy process with non-decreasing sample paths, see e.g.  \cite{bochner1949diffusion}, \cite{bochner2013harmonic}, \cite{schilling2012bernstein}. From the operator semigroup perspective, Bochner subordination is a classical method for generating a new semigroup of linear operators on a Banach space from an  existing one. More formally, using the notation of Section~\ref{subsec:Markov} above, for $P=(P_t)_{t \geq 0}$, a strongly continuous contraction semigroup in $L^2(\nu)$, and $(\mu_t)_{t \geq 0}$, a vaguely continuous convolution semigroup of probability measures on $[0, \infty)$, the subordination of $P$ in the sense of Bochner is defined by
\begin{equation}\label{subord}
P_t^{\varphi}f(x)= \int_{0}^{\infty} P_s f(x)\mu_t(ds), \quad t \geq 0,\ f \in \mathcal{B}_b(E).
\end{equation}
The superscript $\varphi$ alludes to the Laplace exponent of $(\mu_t)_{t \geq 0}$, which is a Bernstein function with the following representation, for $\lambda \geq 0$,
\begin{equation}\label{eq:varphi1}
\varphi(\lambda)=\varrho \lambda+\int_0^\infty(1-e^{-\lambda y})\vartheta(dy),
\end{equation}
where $\varrho \geq 0$, and $\vartheta$ is a L\'evy measure concentrated on $\R_+$ satisfying $\int_0^{\infty}(1 \wedge y)\vartheta(dy) <\infty$. Note that $(\mu_t)_{t \geq 0}$ gives rise to a L\'evy subordinator $\Tau=(\Tau_t)_{t\geq 0}$, which is assumed to be independent of $X$, and the law of $\Tau$ is uniquely characterized by its Laplace exponent $\varphi$, that is, for $t,\lambda \geq 0$,
\begin{equation}\label{eq:varphi}
\E \left[ e^{-\lambda \Tau_t} \right]= e^{-t\varphi(\lambda)}.
\end{equation}
We write $\X = X_{\Tau} =(X_{\Tau_t})_{t \geq 0}$ for the Markov process associated with the semigroup $\PP_t f(x) = P_t^{\varphi}f(x)= \E_x[f(X_{\Tau_t})]$. Moreover, one has that $\nu$ is also an invariant measure for the semigroup $P^{\varphi}$. Indeed, let $f \in \mathcal{B}_b(E)$ and assume, without loss of generality, $f$ is non-negative, then, for $t \geq 0$, we have
\begin{equation*}
\nu P_t^{\varphi} f = \langle P_t^{\varphi} f, \mathbbm{1} \rangle_{\nu} = \int_0^{\infty} \langle P_s f, \mathbbm{1} \rangle_{\nu} \mu_t(ds) =\int_0^{\infty} \nu f \mu_t(ds) = \nu f,
\end{equation*}
where we used Tonelli's theorem, the fact that $\nu$ is an invariant probability measure for $P$, and $\varphi(0)=0$ in \eqref{eq:varphi1}.
Therefore, as above, $P^{\varphi}$ can be extended to a contraction semigroup in $L^2(\nu)$. It is easy to note that the semigroup $P^{\varphi}$ shares the same eigenspaces and co-eigenspaces (eigenspaces for the adjoint) as $P$, and, in particular, we have the following.
\begin{proposition}\label{prop:bochner}
Let $(\Poly_n)_{n \in \NN}$ and $(\Nu_n)_{n \in \NN}$ be as defined in Assumption~\ref{assmp1} with $\PP = P$ of Section~\ref{subsec:Markov}. Then, $(\Poly_n)_{n \in \NN}$ and $(\Nu_n)_{n \in \NN}$ are the eigenfunctions of the semigroup $P^{\varphi}$ and its adjoint in $L^2(\nu)$, respectively, associated to the eigenvalues $(\varphi(\lambda_n))_{n \in \NN}$.
\end{proposition}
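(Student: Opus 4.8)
The plan is to verify the two eigenvalue relations $P^{\varphi}_t \Poly_n = e^{-\varphi(\boldsymbol{\lambda}_n)t}\Poly_n$ and $(P^{\varphi}_t)^* \Nu_n = e^{-\varphi(\boldsymbol{\lambda}_n)t}\Nu_n$ directly from the subordination formula \eqref{subord}, since these precisely say that $\Poly_n$ (resp.~$\Nu_n$) is an eigenfunction of $P^\varphi$ (resp.~its adjoint) with associated eigenvalue $\varphi(\boldsymbol{\lambda}_n)$ in the sense of Assumption~\ref{assmp1}.

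First, for the $\Poly_n$'s I would insert the eigenfunction identity \eqref{eq:eigen}, namely $P_s \Poly_n = e^{-\boldsymbol{\lambda}_n s}\Poly_n$, into the definition \eqref{subord} and pull the eigenfunction out of the integral against $\mu_t$, obtaining
\[
P^{\varphi}_t \Poly_n = \int_0^\infty P_s\Poly_n \, \mu_t(ds) = \Poly_n \int_0^\infty e^{-\boldsymbol{\lambda}_n s}\,\mu_t(ds).
\]
Since $(\mu_t)_{t\geq0}$ is the law of the subordinator $\Tau$, the remaining scalar integral is exactly the Laplace transform $\E[e^{-\boldsymbol{\lambda}_n \Tau_t}] = e^{-t\varphi(\boldsymbol{\lambda}_n)}$ furnished by \eqref{eq:varphi}, which gives the claim. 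The interchange is legitimate because $\boldsymbol{\lambda}_n \geq 0$ makes $s\mapsto e^{-\boldsymbol{\lambda}_n s}$ bounded by $1$ while $\mu_t$ is a probability measure, so the $L^2(\nu)$-valued integral converges and commutes with the (scalar) eigenrelation.

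Second, for the co-eigenfunctions I would first establish that the adjoint of the subordinated semigroup is the subordination of the adjoint by the very same $(\mu_t)_{t\geq0}$, i.e.~$(P^{\varphi}_t)^* g = \int_0^\infty P_s^* g\,\mu_t(ds)$ for $g\in L^2(\nu)$. To this end, for $f,g\in L^2(\nu)$ I would start from $\langle P^{\varphi}_t f, g\rangle_\nu$, apply Fubini's theorem to move the $\mu_t$-integral outside the inner product, use the defining adjoint relation \eqref{eq:adjoint} pointwise in $s$ to replace $\langle P_s f, g\rangle_\nu$ by $\langle f, P_s^* g\rangle_\nu$, and apply Fubini once more to recognize the result as $\langle f, \int_0^\infty P_s^* g\,\mu_t(ds)\rangle_\nu$; uniqueness of the adjoint then yields the stated representation. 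Feeding $g=\Nu_n$ into it and invoking the co-eigenfunction identity \eqref{eq:coeigen}, $P_s^*\Nu_n = e^{-\boldsymbol{\lambda}_n s}\Nu_n$, reduces the computation to the same Laplace-transform evaluation as before and gives $(P^{\varphi}_t)^*\Nu_n = e^{-t\varphi(\boldsymbol{\lambda}_n)}\Nu_n$.

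The only genuine technical point, and hence the main obstacle, is the justification of the two Fubini interchanges and of the existence of the vector-valued Bochner integrals; both are controlled by the contractivity of $P_s$ and $P_s^*$ on $L^2(\nu)$ together with $\mu_t$ being a probability measure, so that the integrands are uniformly bounded in the $L^2(\nu)$-norm and no integrability obstruction arises. Everything else is a direct substitution of the eigenrelations \eqref{eq:eigen}--\eqref{eq:coeigen} followed by the Laplace identity \eqref{eq:varphi}.
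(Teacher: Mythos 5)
Your proposal is correct and follows essentially the same route as the paper: the eigenfunction identity for $\Poly_n$ is the identical substitution of \eqref{eq:eigen} into \eqref{subord} followed by the Laplace identity \eqref{eq:varphi}, and your Fubini interchange for the adjoint part, justified by contractivity of the semigroup and $\mu_t$ being a probability measure, matches the paper's Cauchy--Schwartz bound $\int_0^{\infty} \left\vert \langle P_s f, \Nu_n \rangle_{\nu} \right\vert \mu_t(ds) \le \Vert f \Vert_{\nu} \Vert \Nu_n \Vert_{\nu} < \infty$. The only cosmetic difference is that you first derive the operator representation $(P^{\varphi}_t)^* g = \int_0^\infty P_s^* g\,\mu_t(ds)$ and then specialize to $g=\Nu_n$ (which obliges you to discuss the vector-valued Bochner integral), whereas the paper works directly in weak form, showing $\langle P_t^{\varphi}f, \Nu_n \rangle_{\nu} = e^{-t\varphi(\lambda_n)}\langle f, \Nu_n \rangle_{\nu}$ for all $f \in L^2(\nu)$, which characterizes $\Nu_n$ as a co-eigenfunction by the duality remark following Assumption~\ref{assmp1} and requires only scalar-valued Fubini.
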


\begin{proof}
First, note that for $n \in \NN$, $\Poly_n \in L^2(\nu)$, and for any $t \geq 0$, we have
\begin{equation*}
P_t^{\varphi} \Poly_n = \int_0^{\infty}P_s \Poly_n\mu_t(ds) = \Poly_n \int_0^\infty e^{-\lambda_n s}\mu_t(ds)= e^{-t\varphi(\lambda_n)}\Poly_n,
\end{equation*}
where in the second equality we used \eqref{eq:eigen}, and the last step follows from \eqref{eq:varphi}. Next, for $f \in L^2(\nu)$, $n \in \NN$ and $t \geq 0$, note that
\begin{eqnarray*}
\langle P_t^{\varphi}f, \Nu_n \rangle_{\nu} &=& \int_E P_t^{\varphi}f (x) \Nu_n(x) \nu(dx)= \int_E \int_0^{\infty} P_s f(x)\mu_t(ds)\Nu_n(x)\nu(dx) \\
&=& \int_0^{\infty} \int_E P_s f(x)\Nu_n(x)\nu(dx)\mu_t(ds) = \int_0^{\infty} \langle P_s f, \Nu_n \rangle_{\nu} \mu_t(ds) \\
&=& \int_0^{\infty} \langle f, P_s^* \Nu_n \rangle_{\nu} \mu_t(ds) = \int_0^{\infty} e^{-\lambda_n s} \langle f, \Nu_n \rangle_{\nu} \mu_t(ds) = \langle f, \Nu_n \rangle_{\nu} e^{-t \varphi(\lambda_n)},
\end{eqnarray*}
where in the last two steps we used \eqref{eq:coeigen} and \eqref{eq:varphi}, and we were allowed to change the order of integration using Fubini's theorem, since by Cauchy-Schwartz inequality, we have
\begin{equation*}
\int_0^{\infty} \left\vert \langle P_s f, \Nu_n \rangle_{\nu} \right\vert \mu_t(ds) \le \int_0^{\infty} \Vert P_s f\Vert_{\nu} \Vert \Nu_n \Vert_{\nu} \mu_t(ds) \le \Vert f \Vert_{\nu} \Vert \Nu_n \Vert_{\nu} < \infty.
\end{equation*}
\end{proof}

\subsubsection{Non-Markovian processes obtained by a time-change with an inverse of a subordinator}
Let $\Tau$ denote the subordinator defined in \eqref{eq:varphi}, and define its right inverse, for $t>0$, by
\[ L_t=\inf \{s > 0; \Tau_s>t\}. \]
We point out that $t \mapsto \Tau_t$ is right-continuous and non-decreasing, and hence $t \mapsto L_t$ is also right-continuous and non-decreasing. In particular, when $t \mapsto \Tau_t$ is a.s.~increasing, which is equivalent to $\varphi(\infty)=\infty$ in \eqref{eq:varphi1}, then $t \mapsto L_t$ is continuous and $L_{\Tau_t}=t$ a.s., whereas $\Tau_{L_t}>t$ a.s. Next, let $l_t$ denote the distribution of $L_t$, i.e.~for any $B$ Borelian set of $\R_+$,  $l_t(B)=\Prob(L_t \in B)$. Then, for any $\lambda \geq 0$ and $t \geq 0$, its Laplace transform is denoted by
\begin{equation}
\eta_{t}(\lambda) = \int_0^{\infty}e^{-\lambda s}l_t(ds).
\end{equation}
For sake of simplicity, we assume that $\Prob(L_t < \infty) = \eta_t(0) = \int_0^{\infty} l_t(ds) = 1$ for all $t \geq 0$. However, all of the results presented below could be easily adapted to the case when $\int_0^{\infty} l_t(ds) < 1$ for some $t \geq 0$ (and hence, all $t \geq 0$). Let $\PP = P^{\eta}=(P^{\eta}_t)_{t\geq 0}$ be the family of linear operators defined, for $f \in \mathcal{B}_b(E)$ and $t \geq 0$, by
\[ P^{\eta}_t f(x) = \int_0^\infty P_s f(x) l_t(ds). \]
The corresponding time-changed process will be denoted by $\X =X_L = (X_{L_t})_{t \geq 0}$. As mentioned in the introduction above, this time-change with an inverse of a subordinator in specific situations was discussed in~\cite{leonenko2013correlation} and ~\cite{mijena2014corr}.
In the following we provide some basic properties of $P^{\eta}$.
\begin{proposition}
For any $f \in \mathcal{B}_b(E)$ and $t \geq 0$, $\nu P^{\eta}_t f = \nu f$, i.e.~$\nu$ is a marginal stationary measure, and it is also a limiting distribution for $P^{\eta}$, i.e.~$\lim_{t \rightarrow \infty} \nu P^{\eta}_t f = \nu f$. Moreover, for all $t \geq 0$, $P_t^{\eta}$ can be extended to a contraction in $L^2(\nu)$.
\end{proposition}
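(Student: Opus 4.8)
The plan is to establish the three assertions in parallel, mimicking the computation already carried out for the Bochner subordination in Proposition~\ref{prop:bochner}; the only structural change is that the convolution semigroup $\mu_t$ is replaced by the law $l_t$ of the inverse subordinator, which by the standing assumption $\eta_t(0)=\int_0^\infty l_t(ds)=1$ is again a probability measure. This single fact drives all three claims: it makes $\nu P^\eta_t$ an average of the invariant functionals $\nu P_s$, and it licenses a Jensen estimate for the contraction bound.

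For the stationary-measure property I would first take $f \in \mathcal{B}_b(E)$ non-negative (the general case following by splitting into positive and negative parts) and write
\begin{equation*}
\nu P^\eta_t f = \int_E \int_0^\infty P_s f(x)\, l_t(ds)\, \nu(dx) = \int_0^\infty \left( \int_E P_s f(x)\, \nu(dx) \right) l_t(ds) = \int_0^\infty \nu P_s f \, l_t(ds),
\end{equation*}
where Tonelli's theorem justifies the interchange. Invoking the invariance $\nu P_s f = \nu f$ of $P$ for every $s \ge 0$, the right-hand side becomes $\nu f \int_0^\infty l_t(ds) = \nu f$, which is the stationarity claim. The limiting-distribution statement is then immediate, since $t \mapsto \nu P^\eta_t f = \nu f$ is constant and hence $\lim_{t\to\infty}\nu P^\eta_t f = \nu f$.

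For the contraction property I would argue on the dense subspace $C_0(E)\subseteq L^2(\nu)$ and extend by Hahn--Banach exactly as was done for $\PP_t$ in the preliminaries. For $f \in C_0(E)$, since $l_t$ is a probability measure, Jensen's inequality yields $(P^\eta_t f(x))^2 \le \int_0^\infty (P_s f(x))^2 l_t(ds)$ pointwise, whence
\begin{equation*}
\Vert P^\eta_t f \Vert_\nu^2 \le \int_E \int_0^\infty (P_s f(x))^2 \, l_t(ds)\, \nu(dx) = \int_0^\infty \Vert P_s f \Vert_\nu^2 \, l_t(ds) \le \int_0^\infty \Vert f \Vert_\nu^2\, l_t(ds) = \Vert f \Vert_\nu^2,
\end{equation*}
using Tonelli once more together with the fact that each $P_s$ is a contraction in $L^2(\nu)$. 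Thus $P^\eta_t$ is a contraction on $C_0(E)$ and extends to a contraction on all of $L^2(\nu)$.

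I expect the only genuinely delicate point to be the repeated appeal to Tonelli/Fubini, namely verifying the joint measurability of $(s,x)\mapsto P_s f(x)$ and the finiteness required to swap the integrals. Both are guaranteed here: joint measurability follows from the stochastic continuity of $X$ (so that $s\mapsto P_s f$ is continuous while $x \mapsto P_s f(x)$ is measurable), and the interchanges are permitted by non-negativity in the stationarity step and by the a priori bound $\Vert P_s f\Vert_\nu \le \Vert f\Vert_\nu$ in the contraction step. With these justifications in place, the remaining manipulations are routine.
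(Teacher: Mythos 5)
Your proposal is correct and follows essentially the same route as the paper: the stationarity claim via Tonelli, invariance of $\nu$ under $P$, and the total mass $\int_0^\infty l_t(ds)=1$, and the contraction bound via Jensen's inequality plus Tonelli and the contractivity of each $P_s$ in $L^2(\nu)$. The only cosmetic difference is that you first work on $C_0(E)$ and extend by density, whereas the paper runs the same Jensen computation directly for $f \in L^2(\nu)$; both are valid and the substance is identical.
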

\begin{proof}
Let $f \in \mathcal{B}_b(E)$ and non-negative, then, for any $t \geq 0$, we have, as above,
\begin{equation*}
\nu P_t^{\eta} f = \langle P_t^{\eta} f, \mathbbm{1} \rangle_{\nu} = \int_0^{\infty} \langle P_s f, \mathbbm{1} \rangle_{\nu} \ l_t(ds) = \nu f,
\end{equation*}
where we used Tonelli's theorem, the fact that $\nu$ is an invariant measure for $P$ and $\int_0^{\infty} l_t(ds) = 1$. Next, for a fixed $t \geq 0$ and any $f \in L^2(\nu)$, we note that
\begin{eqnarray*}
\Vert P_t^{\eta} f \Vert_{\nu}^2 = \int_0^{\infty} \left( P_t^{\eta} f(x) \right)^2 \nu(dx) &=&\int_0^{\infty} \left(\int_0^{\infty}  P_s f(x)l_t(ds) \right)^2  \nu(dx) \\
& \le &  \int_0^{\infty} \int_0^{\infty}  (P_s f(x))^2 \nu(dx) l_t(ds)\\
&=& \int_0^{\infty} \Vert P_s f \Vert_{\nu}^2 l_t(ds) \le \Vert f \Vert_{\nu}^2,
\end{eqnarray*}
where we used Jensen's inequality and Tonelli's theorem, and in the last step we used the fact that $P$ is a contraction semigroup in $L^2(\nu)$, and that the total mass of $l_t$ is $1$.
\end{proof}
\subsection{Covariance and correlation functions}~
Notions of covariance and correlation functions have been intensively studied in the statistical literature. For example, the introduction of  \emph{distance covariance} and \emph{distance correlation}, which are analogous to product-moment covariance and correlation but generalize and extend these classical bivariate measures of dependence, is well detailed in Sz\'ekely et~al.~\cite{distance_corr}. More formally, let $X$ and $Y$ be two random vectors with finite first moments in $\R^p$ and $\R^q$, $p,q \in \N$, respectively. For any $d\in \N$, $|\cdot|_d$ denotes the Euclidean norm of the vector in $\R^d$, and
\[c_d = \frac{\pi^{(1+d)/2}}{\Gamma((1+d)/2)}. \]
Then, the \emph{distance covariance} between random vectors $X$ and $Y$ is the non-negative number $V(X,Y)$ defined by
\[ V^2(X,Y) = \Vert f_{X,Y} - f_X f_Y \Vert^2 = \frac{1}{c_p c_q}\int_{\R^{p+q}} \frac{|f_{X,Y}(t,s) - f_X(t)f_Y(s)|^2}{|t|^{1+p}_p |s|^{1+q}_q}dtds, \]
where $f_X$ and $f_Y$ are the characteristic functions of the random vectors $X$ and $Y$, respectively and $f_{X,Y}$ denotes their joint characteristic function. Similarly, the \emph{distance correlation coefficient} between random vectors $X$ and $Y$ with finite first moments is the nonnegative number $R(X,Y)$ defined by
\begin{equation*}
R(X,Y)=\begin{cases}
               \frac{V^2(X,Y)}{\sqrt{V^2(X,X)V^2(Y,Y)}}, \quad \text{if } V^2(X,X)V^2(Y,Y)>0,\\
               0, \quad \text{if } V^2(X,X)V^2(Y,Y)=0.
            \end{cases}
\end{equation*}
Furthermore, note that $R \in [0,1]$, and $R(X,Y) \le |\rho(X,Y)|$, where $\rho$ denotes the Pearson correlation coefficient, and equality holds when $\rho = \pm 1$. We remark that distance correlation measures the strength of relation between $X$ and $Y$, and it generalizes the idea of correlation in two fundamental ways:
\begin{itemize}
\item[(i)] $R(X,Y)$ is defined for $X$ and $Y$ in arbitrary dimensions;
\item[(ii)] $R(X,Y) = 0$ characterizes independence of $X$ and $Y$.
\end{itemize}
The distance correlation coefficient is especially useful for complicated dependence structures in multivariate data. Sz\'ekely et~al.~\cite{distance_corr} discuss some asymptotic properties and present implementation of the independence test and Monte Carlo results. It is worth to mention that Sz\'ekely and Rizzo~\cite{brownian_cov} introduce the notion of covariance with respect to a stochastic process and show that population distance covariance coincides with the covariance with respect to Brownian motion. Furthermore, Bhattacharjee~\cite{dist_corr_stat} elaborates the application of a Bayesian approach in distance correlation which can be useful to test the linear relation between variables.

Another interesting measure of dependence between two random variables $X$ and $Y$ is the \emph{maximal correlation coefficient} introduced by Gebelein~\cite{gebelein} and later studied by R\'{e}nyi~\cite{renyi}, Papadatos and Xifara~\cite{papadatos_maxcorr}, Beigi and Gohari~\cite{beigi_maxcorr}, among other authors. It is defined as
\begin{equation}\label{eq:max_corr}
\rho_{\max}(X,Y) = \sup_{f,g} \lbrace \rho(f(X),g(Y));\  0<\E |f(X)|^2<\infty, 0<\E |g(Y)|^2<\infty \rbrace,
\end{equation}
where the supremum is taken over all Borel measurable functions $f,g :\R \rightarrow \R$, and  $\rho(X,Y)$ is the classical (Pearson) correlation coefficient between the random variables $X$ and $Y$. Definition \eqref{eq:max_corr} is equivalent to
\begin{equation*}
\rho_{\max}(X,Y) = \sup_{f,g}\lbrace \E[f(X)g(Y)]; \E[f(X)] = \E[g(Y)] = 0, \E|f(X)|^2 = \E|g(Y)|^2 = 1 \rbrace,
\end{equation*}
where the supremum is again taken over Borel measurable functions $f,g :\R \rightarrow \R$. The main role of $\rho_{\max}(X,Y)$ is that of a convenient numerical measure of dependence between $X$ and $Y$. In particular, it has the tensorization property, i.e.~it is unchanged when computed
for i.i.d.~copies. Furthermore, $\rho_{\max}(X,Y) = 0$ if and only if $X$ and $Y$ are independent. Even though the maximal correlation coefficient plays a fundamental role in various areas of statistics, despite its usefulness, it is often difficult to calculate it in an explicit form, except in some rare cases. Some well-known exceptions are provided by the results of \cite{gebelein}, \cite{lancaster}, \cite{dembo}, \cite{bryc}, \cite{yu}.

Now, let $\X$ be a stochastic process, and $\nu$ be a Radon measure on the state space of $\X$. We define the \emph{covariance and correlation functions under $\nu$} in the following way. Let $s, t \geq 0$, then for any functions $f,g \in L^2(\nu)$,
\begin{equation}\label{eq:def_cov}
\C_{\nu} (f(\X_t), g(\X_s)) = \E_{\nu} [f(\X_t)g(\X_s)] - \E_{\nu}[f(\X_t)]\E_{\nu} [g(\X_s)],
\end{equation}
\begin{equation}\label{eq:def_corr}
\rho_{\nu}(f(\X_t), g(\X_s)) = \begin{cases} \frac{\C_{\nu}(f(\X_t),g(\X_s)) }{std_{\nu}(f(\X_t))std_{\nu}(g(\X_s))}, \quad \text{if } std_{\nu}(f(\X_t))std_{\nu}(g(\X_s)) > 0 , \\
0, \quad \text{if } std_{\nu}(f(\X_t))std_{\nu}(g(\X_s)) = 0,
\end{cases}
\end{equation}
where $std_{\nu}$ stands for the standard deviation defined by
\[ std_{\nu}(f(\X_t)) = \sqrt{\C_{\nu}(f(\X_t), f(\X_t))}.\]
\begin{definition}\label{definition}
When $\nu$ is a marginal stationary measure for $\X$ and Assumption~\ref{assmp1} holds, for $m,n \in \NN$ and $t,s >0$, we call $\rho_{\nu}(\Poly_m(\X_t), \Poly_n(\X_s))$ (resp. $\rho_{\nu}(\Poly_m(\X_t), \Nu_n(\X_s))$) \emph{(resp. biorthogonal) spectral projections correlation functions}.
\end{definition}
The rest of the paper is organized as follows. In Section \ref{section:main_results}, we present the main results which include explicit expressions for the spectral projections correlation structure of non-reversible Markov processes, of their subordinated counterparts, as well as of non-Markovian processes, obtained by time-changing a Markov process with an inverse of a subordinator. In Section \ref{section:examples}, we illustrate our results for the class of generalized Laguerre processes, which are associated with non-self-adjoint and non-local semigroups. The proofs of the main results are presented in Section \ref{section:proofs}.

\section{Main results}\label{section:main_results}
Let us start with $\X = X$ a Markov process admitting an invariant probability measure $\nu$, i.e.~$\nu P_t f = \nu f$ for all $t \geq 0$ and $f \in L^2(\nu)$ where $P$ is the $L^2(\nu)$-semigroup. Recall from Assumption \ref{assmp1} that $\NN \subset \N$ is a finite or a countable set, and for any $t \geq 0$, $(\Poly_n)_{n \in \NN}$ and $(\Nu_n)_{n \in \NN}$ denote the sets of eigenfunctions of $P_t$ and $P_t^*$, respectively. Next, for $m,n \in \N$, let $\delta_{mn}$ be the Kronecker symbol, i.e.
\begin{equation}\label{eq:delta}
\delta_{mn}=\begin{cases}
               0, \quad \text{if } m \neq n,\\
               1, \quad \text{if } m = n.
            \end{cases}
\end{equation}
Then, we have the following characterization of the (biorthogonal) spectral projections correlation functions.
\begin{theorem}\label{thm:corr}
Let $m,n \in \mathtt{N}$. Then, for any $t \geq s > 0$, we have
\begin{equation*}
\rho_{\nu}(\Poly_m(X_t), \Nu_n(X_s)) = e^{-\lambda_m(t-s)}\kappa_{\nu}^{-1}(m) \delta_{mn},
\end{equation*}
and
\begin{equation*}
\rho_{\nu}(\Poly_m(X_t), \Poly_n(X_s)) = e^{-\lambda_m(t-s)} c_{\nu}(n,m),
\end{equation*}
where $\kappa_{\nu}(m) =\Vert \Poly_m \Vert_{\nu} \Vert \Nu_m \Vert_{\nu}$ and $-1 \le c_{\nu}(n,m) = \frac{\langle \Poly_n, \Poly_m \rangle_{\nu}}{\Vert \Poly_n \Vert_{\nu} \Vert \Poly_m \Vert_{\nu}} \le 1$. Consequently,  $c_{\nu}(n,n)=1$ for any $n \in \mathtt{N}$.
\end{theorem}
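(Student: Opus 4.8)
The plan is to reduce both correlation functions to inner products in $L^2(\nu)$ by exploiting stationarity, the Markov property, and the eigenfunction relations \eqref{eq:eigen}--\eqref{eq:coeigen}, and then to read off the two stated identities directly from the definitions \eqref{eq:def_cov}--\eqref{eq:def_corr}. First I would compute the one-point expectations: by the definition of $\nu P_t$ and invariance, for any $h \in L^2(\nu)$ one has $\E_\nu[h(X_t)] = \nu P_t h = \nu h = \langle h, \mathbbm{1}\rangle_\nu$, independently of $t$.

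The crucial auxiliary fact is that the relevant eigenfunctions are $\nu$-centered. Since $\PP_t\mathbbm{1}=\mathbbm{1}$ and invariance gives $\langle P_t f, \mathbbm{1}\rangle_\nu = \nu P_t f = \nu f = \langle f, P_t^*\mathbbm{1}\rangle_\nu$ for all $f$, the constant $\mathbbm{1}$ is simultaneously an eigenfunction of $P_t$ and of $P_t^*$ with eigenvalue $1$ (i.e.~$\lambda=0$). Applying \eqref{eq:eigen} together with the adjoint relation \eqref{eq:adjoint} then yields
\[
e^{-\lambda_m t}\langle \Poly_m,\mathbbm{1}\rangle_\nu = \langle P_t\Poly_m,\mathbbm{1}\rangle_\nu = \langle \Poly_m, P_t^*\mathbbm{1}\rangle_\nu = \langle \Poly_m,\mathbbm{1}\rangle_\nu,
\]
which forces $\nu\Poly_m=\langle\Poly_m,\mathbbm{1}\rangle_\nu=0$ whenever $\lambda_m\neq 0$, and symmetrically $\nu\Nu_m=0$; the degenerate constant eigenfunction ($\lambda=0$) is excluded because its standard deviation vanishes and the correlation is then $0$ by the convention in \eqref{eq:def_corr}.

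Next I would compute the two-point expectation for $t\geq s$. Conditioning on $\F_s$, using the Markov property $\E_\nu[f(X_t)\mid\F_s]=P_{t-s}f(X_s)$, and then the invariance of $\nu$ under $P_s$, gives
\[
\E_\nu[f(X_t)g(X_s)] = \E_\nu\!\big[g(X_s)\,P_{t-s}f(X_s)\big] = \nu\big(g\cdot P_{t-s}f\big) = \langle P_{t-s}f, g\rangle_\nu,
\]
where the integrability is guaranteed by Cauchy--Schwarz since $g, P_{t-s}f\in L^2(\nu)$ (the latter by contractivity). Specializing $f=\Poly_m$ and invoking \eqref{eq:eigen} to extract $e^{-\lambda_m(t-s)}$, and combining with the centering above, I obtain the covariances $\C_\nu(\Poly_m(X_t),\Nu_n(X_s)) = e^{-\lambda_m(t-s)}\langle\Poly_m,\Nu_n\rangle_\nu = e^{-\lambda_m(t-s)}\delta_{mn}$, using the biorthogonality of Lemma~\ref{lemma:biorth}, and $\C_\nu(\Poly_m(X_t),\Poly_n(X_s)) = e^{-\lambda_m(t-s)}\langle\Poly_m,\Poly_n\rangle_\nu$.

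Finally I would assemble the correlations. Taking $t=s$ in the variance computation gives $std_{\nu}(\Poly_m(X_t))=\Vert\Poly_m\Vert_\nu$ and $std_{\nu}(\Nu_n(X_s))=\Vert\Nu_n\Vert_\nu$, so dividing the two covariances by the corresponding products of standard deviations produces $e^{-\lambda_m(t-s)}\delta_{mn}/(\Vert\Poly_m\Vert_\nu\Vert\Nu_m\Vert_\nu) = e^{-\lambda_m(t-s)}\kappa_\nu^{-1}(m)\delta_{mn}$ and $e^{-\lambda_m(t-s)}\langle\Poly_n,\Poly_m\rangle_\nu/(\Vert\Poly_n\Vert_\nu\Vert\Poly_m\Vert_\nu) = e^{-\lambda_m(t-s)}c_\nu(n,m)$; the bounds $-1\le c_\nu(n,m)\le 1$ and the identity $c_\nu(n,n)=1$ are immediate from Cauchy--Schwarz. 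I expect the only delicate point to be the bookkeeping around the constant eigenfunction and the verification that the eigenfunctions are $\nu$-centered, since otherwise the mean-subtraction in \eqref{eq:def_cov} would leave residual terms; the Markov/stationarity manipulation and the integrability checks are routine.
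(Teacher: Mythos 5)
Your proposal is correct and follows essentially the same route as the paper's proof: you establish that the eigenfunctions are $\nu$-centered via the constant eigenfunction $\mathbbm{1}$ (your duality argument $\left(1-e^{-\lambda_m t}\right)\langle\Poly_m,\mathbbm{1}\rangle_\nu=0$ is just a cleaner phrasing of the paper's $\nu\Poly_m=\delta_{0m}=0$ step), compute $\E_\nu[f(X_t)g(X_s)]=\langle P_{t-s}f,g\rangle_\nu$ by the Markov property and invariance exactly as in Lemma~\ref{lemma:cov}, obtain the standard deviations as in Lemma~\ref{lemma:std}, and assemble with biorthogonality and Cauchy--Schwarz. The only cosmetic difference is that you invoke Lemma~\ref{lemma:biorth} rather than re-deriving it, which is legitimate since the paper states it independently, and your explicit treatment of the degenerate constant eigenfunction is if anything slightly more careful than the paper's.
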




\begin{remark}\label{rem:cond}
We shall show in Lemma~\ref{lemma:biorth} below that $(\Poly_n, \Nu_n)_{n \in \mathtt{N}}$ form a biorthogonal sequence in $L^2(\nu)$ in the sense that $\langle \Poly_m, \Nu_n \rangle_{\nu} = \delta_{mn}$ for any $m,n \in \mathtt{N}$. Then, each (non-orthogonal) spectral projection is given by
\begin{equation*}
\boldsymbol{\mathcal{P}}_m f = \langle f, \Poly_m \rangle_{\nu} \Nu_m, \quad \text{for } f \in L^2(\nu).
\end{equation*}
Moreover, in this context, the number
\[ \kappa_{\nu}(m) = \Vert \Poly_m \Vert_{\nu} \Vert \Nu_m \Vert_{\nu} \]
is called the \emph{condition number} of the eigenvalue $\lambda_m$ and corresponds to the norm of the operator $\boldsymbol{\mathcal{P}}_m$, see e.g.~Davies \cite{davies_cond_number}. The condition number measures how unstable the eigenvalues are under small perturbations of the operator $P_t$. We note that when $(\Poly_n, \Nu_n)_{n \in \NN}$ form an orthonormal sequence, then $\kappa_{\nu}(m)=1$.
\end{remark}
\begin{remark}\label{rem:tame}
Recall that a biorthogonal system $(\Poly_n, \Nu_n)_{n \in \mathtt{N}}$ is called \emph{tame} in $L^2(\nu)$ if $\NN = \infty$, it is complete (i.e.~$\overline{\text{Span}}(\Poly_n)_{n \in \NN}=L^2(\nu)$) and
\[\kappa_{\nu}(m)  = O (m^{\beta}), \]
for all $m \in \NN$ and some $\beta$, i.e.~there exists $b \in \R_+$ and $m_0 \in \N$ such that $|\kappa_{\nu}(m)| = \kappa_{\nu}(m) \le b m^{\beta}$ for all $m \geq m_0$, see Davies \cite{davies_cond_number}. Otherwise, we say that the system is \emph{wild}. It is easy to note that if $(\Poly_n)_{n \in \NN}$ is a basis in $L^2(\nu)$, then $\kappa_{\nu}(m)$ is uniformly bounded, so the system is tame with $\beta = 0$.
\end{remark}
\begin{remark}\label{rem:k-depart}
When $P=(P_t)_{t \geq 0}$ is a self-adjoint compact semigroup, then $\NN=\N$ and $(\Poly_n)_{n\in \N}=(\Nu_n)_{n\in \N}$ form an orthonormal basis of $L^2(\nu)$. However, when $P$ is non-self-adjoint, then $(\Poly_n, \Nu_n)_{n \in \NN}$ do not form, in general, a basis of $L^2(\nu)$. A necessary condition for $(\Poly_n, \Nu_n)_{n \in \NN}$ to form a basis is that the condition number $\kappa_{\nu}(m)$ is uniformly bounded. In this sense, the rate of growth of $\kappa_{\nu}(m)$ also can be seen as a measure of departure of these sequences from the basis property.
\end{remark}
\begin{remark}
From the definition of the inner product, we note that $c_{\nu}(n,m) = \cos \measuredangle(\Poly_n,\Poly_m)$ and $\arccos c_{\nu}(n,m)$ measures the angle between the polynomials $\Poly_n$ and $\Poly_m$ denoted by $\measuredangle(\Poly_n,\Poly_m)$. In particular, the sequence $(\Poly_n)_{n \geq 0}$ is orthogonal if and only if $c_{\nu}(n,m)=0$ for $n \neq m$.
\end{remark}
The proof of Theorem~ \ref{thm:corr} is presented in Section~ \ref{sec:proof_corr}.
\begin{lemma}\label{lemma:corr}
For any $f,g \in L^2(\nu)$ and $t \geq 0$,
\begin{equation*}
\rho_{\nu}(f(X_t),g(X_t)) = \frac{\langle f,g \rangle_{\nu} - \nu f \cdot \nu g}{\sqrt{\nu f^2 - (\nu f)^2} \cdot \sqrt{\nu g^2 - (\nu g)^2}}.
\end{equation*}
In particular, for any $m,n \in \NN$ and $t \geq 0$,
\begin{eqnarray*}
\rho_{\nu}(\Poly_m(X_t),\Poly_n(X_t)) &=& c_{\nu}(n,m),\\
\rho_{\nu}(\Poly_m(X_t),\Nu_n(X_t)) &=& \kappa_{\nu}^{-1}(m)\delta_{mn}.
\end{eqnarray*}
\end{lemma}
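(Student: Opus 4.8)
The plan is to prove the general identity by unfolding the definitions \eqref{eq:def_cov} and \eqref{eq:def_corr} directly, using the single essential fact that $\nu$ is a marginal stationary measure. Concretely, \eqref{eq:inv_mes} says that under $\Prob_{\nu}$ the one-dimensional law of $X_t$ is $\nu$ for every $t$, whence $\E_{\nu}[h(X_t)] = \nu \PP_t h = \nu h$ for every $h \in L^1(\nu)$. The specialization to eigenfunctions will then reduce to two ingredients: that eigenfunctions with nonzero eigenvalue are $\nu$-centred, and the biorthogonality $\langle \Poly_m, \Nu_n \rangle_{\nu} = \delta_{mn}$ of Lemma~\ref{lemma:biorth}.

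First I would compute the equal-time covariance. Since $f,g \in L^2(\nu)$, the product $fg$ lies in $L^1(\nu)$ by Cauchy--Schwarz, so stationarity applied to $h=fg$ gives $\E_{\nu}[f(X_t)g(X_t)] = \nu(fg) = \langle f,g \rangle_{\nu}$, while applied to $h=f$ and $h=g$ it gives $\E_{\nu}[f(X_t)] = \nu f$ and $\E_{\nu}[g(X_t)] = \nu g$. Substituting into \eqref{eq:def_cov} yields $\C_{\nu}(f(X_t),g(X_t)) = \langle f,g \rangle_{\nu} - \nu f \cdot \nu g$, and taking $g=f$ gives $std_{\nu}^2(f(X_t)) = \nu f^2 - (\nu f)^2$. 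Inserting these three expressions into \eqref{eq:def_corr} produces the claimed general formula on the set where both standard deviations are positive; the vanishing case is absorbed by the degenerate branch of \eqref{eq:def_corr}.

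For the ``in particular'' statements I would first record that any eigenfunction with nonzero eigenvalue is $\nu$-centred. Combining stationarity $\nu \PP_t \Poly_n = \nu \Poly_n$ with \eqref{eq:eigen} gives $\nu \Poly_n = e^{-\lambda_n t}\nu \Poly_n$ for all $t \geq 0$, forcing $\nu \Poly_n = 0$ whenever $\lambda_n > 0$; likewise, using $\PP_t \mathbbm{1} = \mathbbm{1}$ together with \eqref{eq:adjoint} and \eqref{eq:coeigen}, $\nu \Nu_n = \langle \mathbbm{1}, \Nu_n \rangle_{\nu} = \langle \PP_t \mathbbm{1}, \Nu_n \rangle_{\nu} = e^{-\lambda_n t}\nu \Nu_n$, so $\nu \Nu_n = 0$ as well. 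With these vanishing means the numerator of the general formula collapses to $\langle \Poly_m, \Poly_n \rangle_{\nu}$, respectively to $\langle \Poly_m, \Nu_n \rangle_{\nu} = \delta_{mn}$ via Lemma~\ref{lemma:biorth}, while each variance collapses to the squared norm $\nu \Poly_m^2 = \Vert \Poly_m \Vert_{\nu}^2$, respectively $\Vert \Nu_n \Vert_{\nu}^2$. This gives exactly $c_{\nu}(n,m)$ and $\kappa_{\nu}^{-1}(m)\delta_{mn}$, recalling $\kappa_{\nu}(m) = \Vert \Poly_m \Vert_{\nu}\Vert \Nu_m \Vert_{\nu}$.

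The only genuinely delicate point is the well-definedness of the correlation, namely strict positivity of the standard deviations, which is exactly why the centring step is needed: it yields $std_{\nu}^2(\Poly_m(X_t)) = \Vert \Poly_m \Vert_{\nu}^2 > 0$ when $\lambda_m > 0$, placing us in the non-degenerate branch of \eqref{eq:def_corr}. The sole excluded index is the one with $\lambda_n = 0$ (if it belongs to $\NN$), whose eigenfunction is constant and has zero variance; there the stated identities must be read through the degenerate branch. I would flag this caveat explicitly rather than count the constant among the relevant indices.
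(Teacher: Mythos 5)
Your proof is correct and follows essentially the same route as the paper: unfold \eqref{eq:def_cov}--\eqref{eq:def_corr}, use the invariance $\nu P_t h = \nu h$ to reduce all equal-time moments to integrals against $\nu$, and then specialize via the biorthogonality of Lemma~\ref{lemma:biorth} together with the vanishing means $\nu \Poly_n = \nu \Nu_n = 0$. Your derivation of the centring from $\nu \Poly_n = e^{-\lambda_n t}\nu \Poly_n$ is a harmless variant of the paper's appeal to $\langle \Poly_m, \mathbbm{1} \rangle_{\nu} = \delta_{0m}$, and your explicit caveat about a possible index with $\lambda_n = 0$ (constant eigenfunction, zero variance, degenerate branch of \eqref{eq:def_corr}) is in fact more careful than the paper, which passes over that case silently.
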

\begin{remark} \label{lemma:nf=0}
Note that if $f,g \in L^2(\nu)$ are such that $\nu f = \nu g = 0$, then for any $t \geq 0$
\begin{equation*}
\rho_{\nu}(f(X_t),g(X_t)) = \frac{\langle f,g \rangle_{\nu}}{\Vert f \Vert_{\nu} \Vert g \Vert_{\nu}}.
\end{equation*}
\end{remark}

The proof of Lemma~\ref{lemma:corr} is presented in Section~\ref{sec:proof_lemma_corr}.

We now proceed by studying the effect of the stochastic time-change in the analysis of the spectral projections correlation function. First, we start with Bochner subordination. To this end, recall that $\Tau=(\Tau_t)_{t\geq 0}$ is a subordinator with Laplace exponent $\varphi$ and transition kernel $\mu_t(ds)$, i.e.~
$$\E \left[ e^{-\lambda \Tau_t} \right]= \int_{0}^{\infty}e^{-\lambda s}\mu_t(ds)=e^{-t\varphi(\lambda)}, \quad \lambda >0, \ t \geq 0,$$
where $\varphi(\lambda)=\varrho \lambda+\int_0^\infty(1-e^{-\lambda y})\vartheta(dy)$ with $\varrho \geq 0$, and $\vartheta$ being a L\'evy measure satisfying $\int_0^{\infty}(1 \wedge y)\vartheta(dy) <\infty$. Denote the semigroup of the subordinated process by $P^{\varphi} = (P_t^{\varphi})_{t \geq 0}$, i.e.~for $f \in \mathcal{B}_b(E)$ and $t \geq 0$,
\[P_t^{\varphi}f(x)= \E_x[f(X_{\Tau_t})]. \]
We recall from Section~\ref{sec:prelim} that $P^{\varphi}$ defines an $L^2(\nu)$-Markov semigroup with $\nu$ as an invariant measure. By combining Proposition~\ref{prop:bochner} and Theorem~\ref{thm:corr}, we obtain the following characterization of the spectral projections correlation structure of the subordinated process.
\begin{corollary}\label{cor:corr_sub}
Moreover, for $m,n \in \NN$ and $t \geq s > 0$, we have
\begin{equation}
\rho_{\nu}(\Poly_m(X_{\Tau_t}), \Nu_n(X_{\Tau_s}))= e^{-\varphi(\lambda_m) (t-s)}\kappa_{\nu}^{-1}(m) \delta_{mn},
\end{equation}
and
\begin{equation}
\rho_{\nu}(\Poly_m(X_{\Tau_t}), \Poly_n(X_{\Tau_s}))= e^{-\varphi(\lambda_m) (t-s)}c_{\nu}(n,m).
\end{equation}
\end{corollary}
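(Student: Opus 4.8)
The plan is to obtain the statement as an immediate consequence of Theorem~\ref{thm:corr} applied to the subordinated process $X_\Tau$ itself. Recall from Section~\ref{subsec:Bochner} that $X_\Tau=(X_{\Tau_t})_{t\geq 0}$ is a Markov process whose $L^2(\nu)$-semigroup is $P^\varphi$, and that $\nu$ is an invariant probability measure for $P^\varphi$. Thus $X_\Tau$ falls squarely within the Markov framework of Section~\ref{subsec:Markov}, and it suffices to verify that Assumption~\ref{assmp1} holds for $P^\varphi$ and then read off the two correlation formulas from Theorem~\ref{thm:corr}. This is exactly what Proposition~\ref{prop:bochner} provides: the families $(\Poly_n)_{n\in\NN}$ and $(\Nu_n)_{n\in\NN}$ remain the eigenfunctions of $P^\varphi_t$ and of its $L^2(\nu)$-adjoint, respectively, now attached to the eigenvalues $\varphi(\lambda_n)$ in place of $\lambda_n$.

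The one point requiring verification is that the substituted eigenvalues $(\varphi(\lambda_n))_{n\in\NN}$ still satisfy the hypotheses of Assumption~\ref{assmp1}, namely that they be distinct and lie in $\R_+$. Positivity is clear since $\varphi$ is a Bernstein function mapping $\R_+$ into $\R_+$. For distinctness, I would differentiate the representation~\eqref{eq:varphi1} to obtain $\varphi'(\lambda)=\varrho+\int_0^\infty y\,e^{-\lambda y}\vartheta(dy)>0$ for $\lambda>0$ (the integral being finite by the L\'evy condition $\int_0^\infty (1\wedge y)\vartheta(dy)<\infty$), so that a non-trivial $\varphi$ is strictly increasing and hence carries the distinct reals $(\lambda_n)_{n\in\NN}$ to distinct reals $(\varphi(\lambda_n))_{n\in\NN}$.

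It then remains to invoke Theorem~\ref{thm:corr} with semigroup $P^\varphi$ and eigenvalues $\varphi(\lambda_m)$, which yields the two displayed identities with $e^{-\lambda_m(t-s)}$ replaced by $e^{-\varphi(\lambda_m)(t-s)}$. The decisive observation is that the multiplicative factors $\kappa_\nu(m)=\Vert\Poly_m\Vert_\nu\,\Vert\Nu_m\Vert_\nu$ and $c_\nu(n,m)=\langle\Poly_n,\Poly_m\rangle_\nu/(\Vert\Poly_n\Vert_\nu\,\Vert\Poly_m\Vert_\nu)$ are built solely from the eigenfunctions and co-eigenfunctions, which subordination leaves unchanged; only the spectral decay rate is modified, from $\lambda_m$ to $\varphi(\lambda_m)$. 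I do not expect any genuine obstacle here, since the result is a clean composition of Proposition~\ref{prop:bochner} and Theorem~\ref{thm:corr}; the sole substantive check is the preservation of distinct, nonnegative eigenvalues addressed above.
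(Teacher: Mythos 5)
Your proposal is correct and takes essentially the same route as the paper, which obtains the corollary precisely by combining Proposition~\ref{prop:bochner} (the eigenfunctions and co-eigenfunctions are unchanged under subordination, with eigenvalues $\varphi(\lambda_n)$) with Theorem~\ref{thm:corr} applied to the Markov process $X_{\Tau}$ and its semigroup $P^{\varphi}$. Your explicit check that a non-trivial Bernstein function is strictly increasing, so that the $(\varphi(\lambda_n))_{n \in \NN}$ remain distinct and nonnegative as required by Assumption~\ref{assmp1}, is a useful detail the paper leaves implicit.
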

\begin{remark}
Since $(\Poly_n,\Nu_n)_{n \in \NN}$ form a biorthogonal sequence in $L^2(\nu)$, and are, respectively, the eigenfunctions of $P^{\varphi}$ and its adjoint in $L^2(\nu)$, the correlation function $\rho_{\nu}(\Poly_m(X_{\Tau_t}), \Poly_n(X_{\Tau_s}))$ (resp. $\rho_{\nu}(\Poly_m(X_{\Tau_t}), \Nu_n(X_{\Tau_s}))$) is the (resp. biorthogonal) spectral projections correlation function of the process $(X_{\Tau_t})_{t \geq 0}$.
\end{remark}
We continue with another stochastic time-change given by an inverse of a subordinator, which, as explained in Section \ref{sec:prelim}, gives rise to a non-Markovian process. Recall that the inverse of the subordinator $\Tau$ is defined for $t>0$ by $L_t=\inf \{s > 0; \Tau_s>t\}$, its distribution is denoted by $l_t$, and its Laplace transform by $\eta_t$, that is for any $\lambda>0$,
\begin{equation*}
\eta_{t}(\lambda) = \int_0^{\infty}e^{-\lambda s}l_t(ds).
\end{equation*}
Also recall that we assume $\eta_t(0) = \int_0^{\infty} l_t(ds) = 1$ for all $t \geq 0$. Then, $P^{\eta}=(P^{\eta}_t)_{t\geq 0}$, defined, for $t \geq 0$ and $f \in L^2(\nu)$, by
\[P^{\eta}_t f(x) = \int_0^\infty P_s f(x) l_t(ds), \]
is a linear operator, and the corresponding time-changed process will be denoted by $X_L = (X_{L_t})_{t \geq 0}$. Note that Leonenko~et~al.~\cite{leonenko2013correlation} and Mijena and Nane~\cite{mijena2014corr} characterize the correlation structure of so-called Pearson diffusions when they are time-changed by an inverse of a linear combination of independent stable subordinators. We extend their methodology by first considering a general Markov process with biorthogonal spectral projections, and then time-changing it with an inverse of any independent subordinator.
We also point out that by following a line of reasoning similar to the proof of Proposition~\ref{prop:bochner}, it can be shown that the biorthogonal sequence $(\Poly_n,\Nu_n)_{n \in \NN}$ represent a set of eigenfunctions of the linear operator $P^{\eta}_t$, $t \geq 0$ and its adjoint in $L^2(\nu)$, respectively. Thus, $\rho_{\nu}(\Poly_m(X_{L_t}), \Poly_n(X_{L_s}))$ (resp.~$\rho_{\nu}(\Poly_m(X_{L_t}), \Nu_n(X_{L_s}))$) is the (resp.~biorthogonal) spectral projections correlation function of the process $X_L$. Finally, we set the following notation.
\begin{itemize}
\item[(a)] We write $f \stackrel{a}{\sim} g$ for $a \in [0,\infty]$ if $\displaystyle{\lim_{x \rightarrow a}\frac{f(x)}{g(x)}=1}$. We may write $f(x) \stackrel{x \rightarrow a}{\sim} g(x)$ to emphasize dependency on the variable $x$.
\begin{itemize}
\item[(a1)] $f$ is called a long-tailed function if $\tau_y f(x) \stackrel{x \rightarrow \infty}{\sim} f(x)$ for any fixed $y>0$, where $\tau_y f(x)=f(x+y)$ is the shift operator.
\item[(a2)] $f$ is called slowly varying at $0$ if $d_a f(x) \stackrel{x \rightarrow 0}{\sim} f(x) $ for any fixed $a > 0$, where $d_a f(x) = f(ax)$ is the dilation operator.
\item[(a3)] We say that $f$ is strongly regularly varying at $a$ with index $0<\alpha < 1$ if $f \stackrel{a}{\sim} p_{\alpha}$, where $p_{\alpha}(x) = C x^{\alpha}$ for some constant $C>0$.
\end{itemize}
\item[(b)] We write $f \stackrel{a}{\asymp} g$ if there exists a constant  $C>0$ such that $\frac{1}{C} g(x) \le f(x) \le C g(x)$ for $x \geq a$.
\item[(c)] We write  $f=O(g)$ if $\overline{\lim}_{x \rightarrow \infty}\left| \frac{f(x)}{g(x)}\right| < \infty$.
\end{itemize}

We are now ready to state our last main result.
\begin{theorem}\label{thm:t-ch}
Let $m,n \in \mathtt{N}$. Then, for $t \geq s > 0$,
\begin{equation}\label{eq:corr_t_ch}
\rho_{\nu}(\Poly_m(X_{L_t}), \Poly_n(X_{L_s})) =c_{\nu}(n,m)\Bigl( \lambda_m \int_0^{s}\eta_{t-r}(\lambda_m)U(dr) + \eta_{t}(\lambda_m) \Bigr),
\end{equation}
and
\begin{equation}\label{eq:corr_t_ch_nu}
\rho_{\nu}(\Poly_m(X_{L_t}), \Nu_n(X_{L_s})) = \kappa_{\nu}^{-1}(m)\delta_{mn} \Bigl( \lambda_m \int_0^{s}\eta_{t-r}(\lambda_m)U(dr) + \eta_{t}(\lambda_m) \Bigr),
\end{equation}
where $U(dr) = \int_0^{\infty} \Prob(\Tau_t \in dr)dt$ is the renewal measure of the subordinator $\Tau$.
Moreover, for any fixed $s > 0$,
\begin{eqnarray*}
c_{\nu}(n,m) \eta_t(\lambda_m)(\lambda_m \E[L_s]+1) \le &\rho_{\nu}(\Poly_m(X_{L_t}), \Poly_n(X_{L_s}))& \le c_{\nu}(n,m) \eta_{t-s}(\lambda_m)(\lambda_m \E[L_s]+1) ,\\
\kappa_{\nu}^{-1}(m)\delta_{mn} \eta_t(\lambda_m)(\lambda_m \E[L_s]+1) \le &\rho_{\nu}(\Poly_m(X_{L_t}), \Nu_n(X_{L_s}))& \le \kappa_{\nu}^{-1}(m)\delta_{mn} \eta_{t-s}(\lambda_m)(\lambda_m \E[L_s]+1).
\end{eqnarray*}
Furthermore, if for a fixed $s > 0$, $\overline{\lim}_{t \rightarrow \infty}\frac{\eta_{t-s}(\lambda_m)}{\eta_t(\lambda_m)} =C$ for some constant $C=C(s,\lambda_m)(\lambda_m \E[L_s]+1)$, then there exists $t_0 >0$ such that
\begin{eqnarray*}
\rho_{\nu}(\Poly_m(X_{L_t}), \Poly_n(X_{L_s})) &\stackrel{t_0}{\asymp} & c_{\nu}(n,m) \eta_{t}(\lambda_m)(\lambda_m \E[L_s]+1) ,\\
\rho_{\nu}(\Poly_m(X_{L_t}), \Nu_n(X_{L_s}))& \stackrel{t_0}{\asymp} & \kappa_{\nu}^{-1}(m)\delta_{mn} \eta_{t}(\lambda_m)(\lambda_m \E[L_s]+1).
\end{eqnarray*}
In particular, if $t \mapsto \eta_t(\lambda_m)$ is a long-tailed function, we have
\begin{eqnarray}
\rho_{\nu}(\Poly_m(X_{L_t}), \Poly_n(X_{L_s})) &\stackrel{t \rightarrow \infty}{\sim}& c_{\nu}(n,m)\eta_{t}(\lambda_m)(\lambda_m \E[L_s]+1), \label{eq:assympt} \\
\rho_{\nu}(\Poly_m(X_{L_t}), \Nu_n(X_{L_s})) &\stackrel{t \rightarrow \infty}{\sim}&  \kappa_{\nu}^{-1}(m)\delta_{mn}\eta_{t}(\lambda_m)(\lambda_m \E[L_s]+1).\label{eq:assympt_bi}
\end{eqnarray}
\end{theorem}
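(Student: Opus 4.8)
The plan is to reduce both correlation functions to a single scalar Laplace transform of the inverse subordinator, to compute that transform explicitly, and then to read off the bounds and asymptotics by elementary monotonicity.

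First I would dispose of the probabilistic part. Fix $t\ge s>0$ and condition on the whole path of $L=(L_r)_{r\ge 0}$, which is independent of $X$ and non-decreasing, so that $L_t\ge L_s$ almost surely. Given $L$, the pair $(X_{L_t},X_{L_s})$ is just $X$ sampled at the deterministic times $L_t\ge L_s$, and the same computation underlying Theorem~\ref{thm:corr} (the Markov property together with \eqref{eq:eigen}, \eqref{eq:coeigen} and the stationarity of $\nu$) gives, for $v\ge u$, $\E_\nu[\Poly_m(X_v)\Poly_n(X_u)]=e^{-\lambda_m(v-u)}\langle\Poly_m,\Poly_n\rangle_\nu$ and $\E_\nu[\Poly_m(X_v)\Nu_n(X_u)]=e^{-\lambda_m(v-u)}\delta_{mn}$. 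Taking expectations over $L$ yields $\E_\nu[\Poly_m(X_{L_t})\Poly_n(X_{L_s})]=\langle\Poly_m,\Poly_n\rangle_\nu\,\E[e^{-\lambda_m(L_t-L_s)}]$, and the analogue with $\Nu_n$. Since $\nu$ is a marginal stationary measure for $X_L$, the law of $X_{L_r}$ under $\Prob_\nu$ is $\nu$ for every $r$, so the standard deviations are constant in time, $std_{\nu}(\Poly_m(X_{L_t}))=\Vert\Poly_m\Vert_\nu$ and $std_{\nu}(\Nu_n(X_{L_s}))=\Vert\Nu_n\Vert_\nu$ (using $\nu\Poly_m=\nu\Nu_n=0$, cf.\ Remark~\ref{lemma:nf=0}). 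Dividing, everything collapses to $\rho_\nu(\Poly_m(X_{L_t}),\Poly_n(X_{L_s}))=c_\nu(n,m)\,\E[e^{-\lambda_m(L_t-L_s)}]$ and the same with prefactor $\kappa_\nu^{-1}(m)\delta_{mn}$. Hence \eqref{eq:corr_t_ch}–\eqref{eq:corr_t_ch_nu} are equivalent to the single scalar identity
\[ \E[e^{-\lambda(L_t-L_s)}]=\eta_t(\lambda)+\lambda\int_0^s\eta_{t-r}(\lambda)U(dr),\qquad \lambda=\lambda_m ; \]
denote it by $(\star)$.

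The core of the argument, and the step I expect to be the main obstacle, is establishing $(\star)$. Here I would use that $r\mapsto L_r$ is non-decreasing (and continuous in the generic case $\varphi(\infty)=\infty$), so the Lebesgue--Stieltjes fundamental theorem of calculus applied to $r\mapsto e^{-\lambda(L_t-L_r)}$ on $[0,s]$ gives $e^{-\lambda(L_t-L_s)}=e^{-\lambda L_t}+\lambda\int_0^s e^{-\lambda(L_t-L_r)}dL_r$. Taking expectations, the first term contributes $\eta_t(\lambda)$. For the second term I would invoke the regeneration/occupation formula for the inverse subordinator: the Stieltjes measure $dL_r$ charges only points of increase of $L$, where $\Tau_{L_r}=r$ (no overshoot), and at each such first-passage time the strong Markov property of $\Tau$ makes the post-$r$ increment regenerate, $L_t-L_r\stackrel{d}{=}\tilde L_{t-r}$ with $\tilde L$ an independent copy of $L$, so that the conditional expectation of $e^{-\lambda(L_t-L_r)}$ given the past equals $\eta_{t-r}(\lambda)$; combined with $\E[dL_r]=U(dr)$ (the content of $\E[L_r]=U[0,r]$, i.e.\ the renewal measure is the mean measure of $dL$) this yields $\E[\int_0^s e^{-\lambda(L_t-L_r)}dL_r]=\int_0^s\eta_{t-r}(\lambda)U(dr)$, proving $(\star)$. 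The delicate points are the rigorous justification of this occupation identity (cleanest via the compensation formula at first-passage times, and, in the compound-Poisson case where $L$ jumps, via the jump version of the Stieltjes formula) and the interchange of expectation and the $dL_r$-integral, which I would handle by dominated convergence since the integrand is bounded by $1$. As a cross-check I would verify that $(\star)$ at $s=t$ reduces to the renewal equation $\eta_t(\lambda)+\lambda\int_0^t\eta_{t-r}(\lambda)U(dr)=1$, equivalent via Laplace transforms to the classical identity $\int_0^\infty e^{-pt}\eta_t(\lambda)\,dt=\frac{\varphi(p)}{p(\varphi(p)+\lambda)}$, and at $s=0$ to $\E[e^{-\lambda L_t}]=\eta_t(\lambda)$.

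Finally, the bounds and asymptotics follow from $(\star)$ by monotonicity. Since $t\mapsto L_t$ is non-decreasing, $t\mapsto\eta_t(\lambda_m)=\E[e^{-\lambda_m L_t}]$ is non-increasing, so for $r\in[0,s]$ one has $\eta_t(\lambda_m)\le\eta_{t-r}(\lambda_m)\le\eta_{t-s}(\lambda_m)$; inserting this into $(\star)$ and using $\int_0^s U(dr)=U[0,s]=\E[L_s]$ gives $\eta_t(\lambda_m)(\lambda_m\E[L_s]+1)\le\E[e^{-\lambda_m(L_t-L_s)}]\le\eta_{t-s}(\lambda_m)(\lambda_m\E[L_s]+1)$, which after multiplication by $c_\nu(n,m)$ (resp.\ $\kappa_\nu^{-1}(m)\delta_{mn}$) are exactly the stated two-sided bounds. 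If $\overline{\lim}_{t\to\infty}\eta_{t-s}(\lambda_m)/\eta_t(\lambda_m)$ is finite, the upper and lower bounds are comparable up to a constant for all large $t$, giving the relation $\stackrel{t_0}{\asymp}$. When $t\mapsto\eta_t(\lambda_m)$ is long-tailed, Definition (a1) with shift $y=s$ gives $\eta_{t-s}(\lambda_m)\stackrel{t\to\infty}{\sim}\eta_t(\lambda_m)$, so the two bounds share the same leading term and the squeeze yields the exact equivalences \eqref{eq:assympt}–\eqref{eq:assympt_bi}.
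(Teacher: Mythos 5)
Your reduction and the final monotonicity/squeeze arguments are correct, and the route is genuinely different from the paper's. By conditioning on the independent path of $L$ and using $\Prob(L_t\ge L_s)=1$, you collapse both correlations to $c_{\nu}(n,m)\,\E\bigl[e^{-\lambda_m(L_t-L_s)}\bigr]$, resp.\ $\kappa_{\nu}^{-1}(m)\delta_{mn}\,\E\bigl[e^{-\lambda_m(L_t-L_s)}\bigr]$, so the theorem is equivalent to your scalar identity $(\star)$. The paper instead evaluates $\E[F(L_t,L_s)]$ by the bivariate integration-by-parts formula of Gill et al.\ against the joint law $\overline{H}_{t,s}(u,v)=\Prob(\Tau_u\le t,\Tau_v\le s)$, identifies $\eta_t(\lambda)=\lambda\int_t^{\infty}U_{\lambda}(dw)$ by Laplace inversion of \eqref{eq:double_Laplace}, and, for the biorthogonal statement, needs a separate argument that the contribution of the region $\{u<v\}$ vanishes (there $\rho_{\nu}(\Poly_m(X_u),\Nu_n(X_v))$ has no closed form, $\Nu_n$ not being an eigenfunction of $P$), which it gets via a $t=s$ transform computation (Remark \ref{rem:1}). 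Your conditioning kills that region outright and avoids all the $\lambda_n$-terms that appear and cancel in the paper. Moreover, your proof of $(\star)$ is complete whenever $\varphi(\infty)=\infty$: then $L$ is continuous with $L_0=0$, the chain rule is exact, and the occupation step is cleanest via the deterministic-time change of variables $\int_{[0,s]}g(r)\,dL_r=\int_0^{\infty}g(\Tau_u)1_{\{\Tau_u\le s\}}\,du$, after which only the simple Markov property at the fixed time $u$ and $L_t=u+\tilde L_{t-\Tau_u}$ on $\{\Tau_u\le t\}$ are needed, with $\E[L_s]=U(0,s)<\infty$ justifying Fubini. No strong Markov or compensation formula is actually required in that case.

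The genuine gap is the compound Poisson case $\varphi(\infty)<\infty$, which the theorem is meant to cover (the Poisson subordinator of Example \ref{ex:poisson}). There your two intermediate claims are both false as stated, not merely in need of polish: $L$ has jumps, so the pathwise Stieltjes identity acquires jump corrections; and the points charged by $dL_r$ carry overshoot, $\Tau_{L_r}>r$, so the regeneration gives $L_t-L_r\stackrel{d}{=}\tilde L_{t-r}-\tilde L_0$ with $\tilde L_0>0$, not $\tilde L_{t-r}$. Concretely, for a rate-$\theta$ Poisson subordinator with $q=1+\lambda/\theta$, the true value is $\E[e^{-\lambda(L_t-L_s)}]=q^{[s+1]-[t+1]}$ (a sum of $[t+1]-[s+1]$ independent exponentials), and this does agree with the right-hand side of \eqref{eq:corr_t_ch} (note in passing that the geometric sum there gives $q^{[s+1]}-1$, so \eqref{eq:poisson} should read $q^{[s+1]-[t+1]}$ rather than $q^{-[t+1]}(2-q^{[s+1]})$); but the correctly evaluated expectation of your Stieltjes term is $\lambda\int_0^s\E[e^{-\lambda(L_{t-r}-L_0)}]\,U(dr)$ with $\E[e^{-\lambda(L_w-L_0)}]=q^{-[w]}\ne\eta_w(\lambda)=q^{-[w+1]}$, and $\eta_t(\lambda)+\lambda\int_0^s q^{-[t-r]}\,U(dr)=q^{-[t+1]}\bigl(1-q+q^{[s+1]+1}\bigr)\ne q^{[s+1]-[t+1]}$. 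So the naive pathwise identity must also fail, and the two defects have to cancel exactly; exhibiting that cancellation is where the real work of your ``jump version'' lies, and your sketch does not do it. To close the gap, either treat this case separately --- e.g., the strong Markov property at the stopping time $L_s$ gives $\E[e^{-\lambda(L_t-L_s)}]=\Prob(\Tau_{L_s}>t)+\E\bigl[\eta_{t-\Tau_{L_s}}(\lambda);\Tau_{L_s}\le t\bigr]$, and the first-passage overshoot law of $\Tau_{L_s}$ is classical in terms of $U$ and $\vartheta$ --- or fall back on a transform argument valid for all subordinators, such as the paper's computation through $\Prob(\Tau_u\le t,\Tau_v\le s)$ together with \eqref{eq:double_Laplace}.
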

The proof of this theorem is presented in Section \ref{sec:proof_t_ch}. We complete this part with the following result which provides a sufficient condition for $\eta_t$ to be a long-tailed function.
\begin{proposition}\label{prop:long_tailed}
For any $\lambda > 0$, there exists a positive random variable $X_{\lambda}$ such that $\eta_t(\lambda)$ is the tail of its distribution, i.e.~$\eta_t(\lambda) = \Prob(X_{\lambda}>t)$, $t >0$. Moreover, $\eta_t(\lambda)$ is a long-tailed distribution  if $\varphi$ is strongly regularly varying at $0$.
\end{proposition}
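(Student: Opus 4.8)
The plan is to establish the two assertions by separate routes: pathwise monotonicity for the existence of $X_\lambda$, and a Tauberian passage through the Laplace transform in $t$ for the long-tailed property.

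For the first claim I would argue directly from the inverse subordinator. Since $t \mapsto L_t$ is non-decreasing and right-continuous (as recorded above) and $\lambda>0$, the map $t \mapsto e^{-\lambda L_t}$ is pathwise non-increasing and right-continuous; taking expectations and dominating by $1$ shows that $t \mapsto \eta_t(\lambda) = \E[e^{-\lambda L_t}]$ is non-increasing and right-continuous. Because $\Tau_{0^+}=0$ forces $L_0=0$ we have $\eta_{0^+}(\lambda)=1$, and since $\Tau_s<\infty$ a.s.~for every fixed $s$ we get $L_t \to \infty$ a.s., hence $\eta_t(\lambda)\to 0$ as $t \to \infty$ by dominated convergence. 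Therefore $t \mapsto 1-\eta_t(\lambda)$ is a bona fide distribution function vanishing at $0$ and tending to $1$, which defines a positive random variable $X_\lambda$ with $\Prob(X_\lambda>t)=\eta_t(\lambda)$.

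For the second claim the key ingredient is the double Laplace transform identity
\[ \int_0^\infty e^{-qt}\eta_t(\lambda)\,dt = \frac{\varphi(q)}{q(\lambda+\varphi(q))}, \qquad q>0. \]
I would derive it from the hitting-time duality $\Prob(L_t>s)=\Prob(\Tau_s\le t)$: by Fubini $\int_0^\infty e^{-qt}\Prob(L_t>s)\,dt = q^{-1}\E[e^{-q\Tau_s}]=q^{-1}e^{-s\varphi(q)}$, and inserting this into the elementary tail representation $\E[e^{-\lambda L_t}] = 1-\lambda\int_0^\infty e^{-\lambda s}\Prob(L_t>s)\,ds$ and integrating in $s$ produces the stated formula. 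When $\varphi$ is strongly regularly varying at $0$, i.e.~$\varphi(q)\sim Cq^\alpha$ as $q\to 0^+$ with $0<\alpha<1$, the right-hand side satisfies $\frac{\varphi(q)}{q(\lambda+\varphi(q))}\sim \frac{C}{\lambda}\,q^{-(1-\alpha)}$. Since $t\mapsto\eta_t(\lambda)$ is monotone by the first part, Karamata's Tauberian theorem applied to $U(t)=\int_0^t\eta_s(\lambda)\,ds$ together with the monotone density theorem yields the pointwise asymptotic $\eta_t(\lambda)\sim \frac{C}{\lambda\Gamma(1-\alpha)}\,t^{-\alpha}$ as $t\to\infty$; in particular $t\mapsto\eta_t(\lambda)$ is regularly varying with index $-\alpha$, so $\eta_{t+y}(\lambda)/\eta_t(\lambda)\to 1$ for every fixed $y>0$, i.e.~$\eta_t(\lambda)$ is long-tailed.

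The main obstacle is the Tauberian step: one must extract the asymptotics of $\eta_t(\lambda)$ itself and not merely of its integral, and this is precisely where the monotonicity secured in the first part is indispensable, through the monotone density theorem. A lesser point requiring care is the boundary/atom handling in the duality $\Prob(L_t>s)=\Prob(\Tau_s\le t)$, but this is harmless given $\Tau_{0^+}=0$, which rules out an atom of $L_t$ at the origin for $t>0$ and leaves the $t$-integral in the derivation unaffected.
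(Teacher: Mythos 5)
Your proof is correct, but it reaches both conclusions by a genuinely different route than the paper. For the existence of $X_{\lambda}$, the paper does not argue pathwise: it identifies $\eta_t(\lambda)$, via the Laplace transform identity $\mathcal{L}_{\eta_{\cdot}(\lambda)}(q)=\varphi(q)/(q(\lambda+\varphi(q)))$ and injectivity of the Laplace transform, as the tail of the renormalized $\lambda$-potential measure $\widetilde{U}_{\lambda}(dw)=\lambda\int_0^{\infty}\Prob(\Tau_z\in dw)e^{-\lambda z}dz$, i.e.~the law of $\Tau$ evaluated at an independent exponential time of rate $\lambda$. This buys an explicit law for $X_{\lambda}$ and, more importantly, the identity $\eta_t(\lambda)=\lambda\int_t^{\infty}U_{\lambda}(dw)$, which is reused verbatim in the proof of Theorem~\ref{thm:t-ch} (the computation of $I_3$); your monotonicity/right-continuity argument is more elementary and self-contained but yields no such representation. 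For the asymptotics, the paper simply cites Mijena and Nane for the double Laplace transform, which you instead derive from the duality $\Prob(L_t>s)=\Prob(\Tau_s\le t)$ --- a nice self-contained addition, and the atom/boundary issue you flag is indeed harmless since the identity holds for a.e.~$s$ and is only used under an integral. The paper then applies the distribution-tail Tauberian theorem \cite[Corollary 8.1.7]{bingham} to $1-\mathcal{L}_{\widetilde{U}_{\lambda}}(q)=\varphi(q)/(\lambda+\varphi(q))$, exploiting the probability-measure structure from the first part, whereas you run Karamata's theorem on $\int_0^t\eta_s(\lambda)ds$ followed by the monotone density theorem, with monotonicity supplied by your first part --- equivalent in substance, and your bookkeeping of the constant (giving $\eta_t(\lambda)\sim \frac{C}{\lambda\Gamma(1-\alpha)}t^{-\alpha}$) is actually more careful than the paper's, which silently drops the factor $C/\lambda$. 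Finally, you conclude long-tailedness directly from regular variation with index $-\alpha$, while the paper passes through slow variation of $t\mapsto\eta_{\log t}(\lambda)$ and \cite[Lemma 2.15]{foss_long_tailed}; your shortcut is valid.

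One blemish worth flagging, though it is shared with the paper's own proof: your claim that $\Tau_{0^+}=0$ forces $L_0=0$, hence $\eta_{0^+}(\lambda)=1$, fails when $\Tau$ is a driftless finite-activity subordinator. For the Poisson subordinator of Example~\ref{ex:poisson}, $L_0$ is the first jump time, so $\eta_0(\lambda)=\theta/(\theta+\lambda)<1$; the paper's assertion ``$\eta_0(\lambda)=1$'' is wrong in the same case. Nothing essential is lost: $t\mapsto\eta_t(\lambda)$ is still non-increasing, right-continuous and tends to $0$, so it is the tail of a probability law on $[0,\infty)$ with a possible atom at the origin (consistent with the representation of $X_{\lambda}$ as $\Tau$ run for an independent exponential time), and the proposition only requires $\Prob(X_{\lambda}>t)=\eta_t(\lambda)$ for $t>0$. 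But ``positive'' must then be read as ``non-negative'', and your sentence deducing $\eta_{0^+}(\lambda)=1$ should be weakened accordingly; your correct observation that $\Prob(L_t=0)=0$ for $t>0$ is unaffected.
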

The proof of this proposition is presented in Section~\ref{sec:proof_long_tailed}.

\subsection{Interpretation of the (biorthogonal) spectral projections correlation functions for statistical properties} The results presented above regarding the (biorthogonal) spectral projections correlation functions and their asymptotic behavior provide an interesting approach for designing statistical tests in order to identify substantial properties of a stochastic process. More formally, we start by assuming that the sample $\widehat{\X} = (\widehat{\X}_1,\cdots,\widehat{\X}_T)$, with $T \in \N$ large, is coming from a stochastic process $\X$ which belongs to some family with a marginal stationary measure $(\nu_i)_{i \in I}$ and associated biorthogonal sequence $\left( (\Poly_n^{(i)}, \Nu_n^{(i)})_{n \in \NN} \right)_{i \in I}$ as defined in Assumption~\ref{assmp1}, where $I$ is the index set of the family. For example, in the case when $\X$ belongs to the family of generalized Laguerre processes presented in Section~\ref{section:gL} below, we can consider one element from each of the following sub-families: a pure diffusion, a diffusion component and jumps with finite activity, a diffusion component and jumps with infinite activity, and a pure jump process.
Now, based on the (biorthogonal) spectral projections correlation structure, one can identify
\begin{itemize}
\item[(a)] how far from symmetry (self-adjointness) the process is,
\item[(b)] what type of range dependence (short-to-long) it displays, and
\item[(c)] the path properties of the process (c\'adl\'ag or continuous paths).
\end{itemize}
For designing statistical tests, one can rely on the estimates of $\kappa_{\nu_i}(m)$ and/or $c_{\nu_i}(n,m)$, $i \in I$, $n,m \in \NN$. Since $\left(\kappa_{\nu_i}(m)\right)_{m \in \NN}$ contain information about both of the sequences $(\Poly_n^{(i)})_{n \in \NN}$ and $(\Nu_n^{(i)})_{n \in \NN}$, below we describe some statistical tests involving the condition number. However, the estimates of $c_{\nu_i}(n,m)$ can be useful to further refine the search of the process. More precisely, based on the main results presented in Section~\ref{section:main_results}, one can make the following implications.
\begin{itemize}
\item[(a)] To study the possible \emph{departure from symmetry} of $\widehat{\X}$, see Remark ~\ref{rem:k-depart}, following the results provided by Lemma~\ref{lemma:corr}, we first take $t=s = k$ for some $k \in \{1,\cdots, T \}$ and $m = n \in \NN$. Then, since the marginal stationary measure guarantees that the statistical properties of the process do not change over time, for each $i \in I$, we compute the empirical estimates of the condition number $\kappa_{\nu_i}(m)$ for some $m \in \NN$, by
\begin{eqnarray}\label{eq:kappa_hat}
\widehat{\kappa}_{\nu_i}^{-1}(m) &=&  \widehat{\rho}_{\nu_i}(\Poly_m^{(i)}(\widehat{\X}_k), \Nu_m^{(i)}(\widehat{\X}_k)) \nonumber \\
& =&
\frac{\sum_{j=1}^T \left( \Poly_m^{(i)}(\widehat{\X}_j)-\overline{\Poly}_m^{(i)}(\widehat{\X}) \right) \left( \Nu_m^{(i)}(\widehat{\X}_j)-\overline{\Nu}_m^{(i)}(\widehat{\X})\right)}{\sqrt{\sum_{j=1}^T\left( \Poly_m^{(i)}(\widehat{\X}_j)-\overline{\Poly}_m^{(i)}(\widehat{\X}) \right)^2}\sqrt{\sum_{j=1}^T\left( \Nu_m^{(i)}(\widehat{\X}_j)-\overline{\Nu}_m^{(i)}(\widehat{\X})\right)^2}} ,
\end{eqnarray}
where $\overline{\Poly}_m^{(i)}(\widehat{\X})=\frac{1}{T}\sum_{j=1}^T \Poly_m^{(i)}(\widehat{\X}_j)$ and $\overline{\Nu}_m^{(i)}(\widehat{\X})=\frac{1}{T}\sum_{j=1}^T \Nu_m^{(i)}(\widehat{\X}_j)$ are the sample means.
Next, we compute the theoretical condition number by
\[ \kappa_{\nu_i}(m) = \Vert \Poly_m^{(i)}\Vert_{\nu_i}\Vert \Nu_m^{(i)}\Vert_{\nu_i}. \]
Finally, to identify the couple $(\Poly_n^{(i)}, \Nu_n^{(i)})_{n \in \NN}$, we choose $\epsilon_{S} > 0$, and check if
\begin{equation}\label{eq:eps}
| \kappa_{\nu_i}(m) - \widehat{\kappa}_{\nu_i}(m)| < \epsilon_{S}.
\end{equation}
For the next step, for sake of simplicity, we suppose that there is only one $\bar{i} \in I$ such that the condition~\eqref{eq:eps} is satisfied.
\item[(b)] To asses the \emph{range dependence} of the sample, we study the asymptotic behavior of the empirical correlation $\widehat{\rho}_{\nu_{\bar{i}}}(\Poly_m^{(\bar{i})}(\widehat{\X}_k), \Nu_m^{(\bar{i})}(\widehat{\X}_j))$, $k,j \in \{1,\cdots,T\}$, $k > j$, $m \in \NN$. More formally, we first compute $\widehat{\kappa}_{\nu_{\bar{i}}}(m)$ by \eqref{eq:kappa_hat}, fix some $j \in \{1,\cdots,T\}$ (one can simply set $j = 1$ or $j = 2$), and we proceed by studying
\begin{equation}
g_{\lambda_m}(k) = \widehat{\kappa}_{\nu_{\bar{i}}}(m)\cdot \widehat{\rho}_{\nu_{\bar{i}}}(\Poly_m^{(\bar{i})}(\widehat{\X}_k), \Nu_m^{(\bar{i})}(\widehat{\X}_j)).
\end{equation}
Now, if $k \mapsto g_{\lambda_m}(k)$, $j < k \in \{1,\cdots,T\}$ exhibits exponential decay with respect to $\lambda_m$, then we have short-range dependence. In contrast, if it exhibits a polynomial decay, then the process has long-range dependence, and, in particular, it is not a (subordinated) Markov process. We remark that although these two cases are the most popular ones discussed in the literature, depending on the rate of  decay of the correlation function, the process can exhibit short-to-long-range dependence. The concept of long-range dependence has been repeatedly used to describe properties of financial time series such as stock prices, foreign exchange rates, market indices and commodity prices. In this context, based on the behavior of (biortogonal) spectral projections correlation functions, in their working paper~\cite{empirical_corr}, the authors provide a more detailed empirical study to detect the (short-to-long-) range dependence in volatility in financial markets.
\item[(c)] Finally, to study the \emph{path properties} of the process, i.e.~the presence of jumps and their activity, we study the behavior of $\widehat{\kappa}_{\nu_{\bar{i}}}(m)$ for large $m$. To illustrate this with a specific example, let us consider the class of generalized Laguerre processes introduced in Section ~\ref{section:gL}. Note that this class encompasses a range of symmetries and jumps. Then, one can identify the following cases.
\begin{enumerate}[(i)]
\item If $\widehat{\kappa}_{\nu_{\bar{i}}}(m) = 1$, $m \in \NN$, then the process is a pure diffusion, see Section~ \ref{section:self_adj}.
\item If $\widehat{\kappa}_{\nu_{\bar{i}}}(m)=O(m^{\beta})$ for some $\beta$, then the process has both a diffusion component and a jump component with finite activity, see Section~ \ref{section:small_pert}.
\item If $\widehat{\kappa}_{\nu_{\bar{i}}}(m)=O(e^{\epsilon m})$ for any $\epsilon >0$, then, similarly, the process has both diffusion and jump components while in this case jumps have infinite activity, see Section~ \ref{section:big_jumps}.
\item If $\widehat{\kappa}_{\nu_{\bar{i}}}(m)=O \left(e^{m^{\beta}}\right)$ for some $\beta$, then the process is a pure jump process.
\end{enumerate}
The problem of deciding whether the continuous-time process which models an economic or financial time series has continuous paths or exhibits jumps is an important issue. For example, A\"{i}t-Sahalia and Jacod~\cite{aitsahalia_jacod_jumps} design a test to identify the presence of jumps in a discretely observed semimartingale, based on power variations sampled at different frequencies. Furthermore, in this setting, the authors of~\cite{aitsahalia_jacod_jumps_activity} propose statistical tests to discriminate between the finite and infinite activity of jumps in a semimartingale. We emphasize that our approach allows one to design a  statistical test in order to identify both the presence and the types of jumps.
\end{itemize}

\section{Examples}\label{section:examples}
In this section, we illustrate the results of Section~\ref{section:main_results} for the class of generalized Laguerre semigroups which have been studied in depth by Patie and Savov in~ \cite{patie2015spectral}. To investigate the behavior of (biorthogonal) spectral projections correlation structure in various scenarios, we first discuss two important examples of subordinators and their inverses.
\begin{example}\label{ex:alpha_st}
Let $\Tau$ be an $\alpha$-stable subordinator, i.e.~in \eqref{eq:varphi}, $\varphi(\lambda)=\lambda^{\alpha}$, $0< \alpha <1$. We recall from \cite{leonenko2013correlation} that for any $\lambda > 0$ and $t \geq 0$, the Laplace transform of its inverse is given by
$$\eta_t(\lambda) = E_{\alpha}(-\lambda t^{\alpha}),$$
where $E_{\alpha}$ is the Mittag-Lefler function defined by $E_{\alpha}(z) = \sum_{k=0}^{\infty}\frac{z^k}{\Gamma(\alpha k + 1)}$ for $z \in \mathbb{C}$.
On the other hand, since $U(ds)=\frac{r^{\alpha-1}}{\Gamma(\alpha)}ds$, $s>0$, we have that
$\E[L_s] = U(0,s) = \frac{s^{\alpha}}{\Gamma(1+\alpha)}$.
Now, Corollary \ref{cor:corr_sub} yields that for any $m,n \in \NN$ and $t \geq s>0$,
\begin{eqnarray*}
\rho_{\nu} \left(\Poly_m \left(X_{\Tau_t}\right), \Nu_n \left(X_{\Tau_s}\right) \right) &=& e^{-\lambda_m^{\alpha}(t-s)} \kappa_{\nu}^{-1}(m)\delta_{mn},\\
\rho_{\nu} \left(\Poly_m \left(X_{\Tau_t}\right), \Poly_n \left(X_{\Tau_s}\right) \right) &=& e^{-\lambda_m^{\alpha}(t-s)}c_{\nu}(n,m).
\end{eqnarray*}
Next, it follows from Theorem~\ref{thm:t-ch} that
\begin{eqnarray*}
\rho_{\nu} \left(\Poly_m \left(X_{L_t}\right), \Poly_n \left(X_{L_s}\right) \right) &=& c_{\nu}(n,m) \left(\lambda_m \int_{0}^s \eta_{t-r}(\lambda_m)U(dr)+\eta_t(\lambda_m) \right)\\
&=& c_{\nu}(n,m) \left(\lambda_m \int_{0}^s E_{\alpha}(-\lambda_m (t-r)^{\alpha})\frac{r^{\alpha-1}}{\Gamma(\alpha)}dr + E_{\alpha}(-\lambda_m t^{\alpha}) \right)\\
&=& \frac{c_{\nu}(n,m)\ \lambda_m  t^\alpha}{\Gamma(\alpha)}\int_0^{s/t}{\frac{E_\alpha(-\lambda_m t^\alpha(1-z)^\alpha)}{z^{1-\alpha}}dz} + c_{\nu}(n,m) E_\alpha(-\lambda_m t^\alpha).
\end{eqnarray*}
Note that since $\varphi$ is strongly regularly varying at $0$, we have, by Proposition~\ref{prop:long_tailed}, that $t \mapsto \eta_t$ is a long-tailed function. Furthermore, it is well known, see e.g.~\cite{leonenko2013correlation}, that when $t \rightarrow \infty$,
\begin{equation*}
\eta_t(\lambda) = E_{\alpha}(-\lambda t^{\alpha}) \sim \frac{1}{\Gamma(1-\alpha)\lambda t^{\alpha}}.
\end{equation*}
Hence, from Theorem~\ref{thm:t-ch}, we deduce that for a fixed $s>0$, when $t \rightarrow \infty$, we have
\begin{eqnarray}
\rho_{\nu}(\Poly_m(X_{L_t}), \Poly_n(X_{L_s})) &\sim &  
  \frac{c_{\nu}(n,m) }{\Gamma(1-\alpha)t^{\alpha}} \left( \frac{1}{\lambda_m} + \frac{s^{\alpha}}{\Gamma(1+\alpha)} \right), \label{eq:alpha}
\end{eqnarray}
and, similarly,
\begin{equation*}
\rho_{\nu}(\Poly_m(X_{L_t}), \Nu_n(X_{L_s})) \sim \frac{\kappa_{\nu}^{-1}(m) \delta_{mn} }{\Gamma(1-\alpha)t^{\alpha}} \left( \frac{1}{\lambda_m} + \frac{s^{\alpha}}{\Gamma(1+\alpha)} \right).
\end{equation*}
When $X$ is a Pearson diffusion, we note that \eqref{eq:alpha} boils down to the case discussed in \cite{leonenko2013correlation}. Finally, since the correlation functions decay in a polynomial rate of $\alpha \in (0,1)$, here the process $X_L$ exhibits long-range dependence.
\end{example}

\begin{example}\label{ex:poisson}
Let $\Tau$ be a Poisson subordinator with mean $\frac{1}{\theta}$, i.e.
in \eqref{eq:varphi}, $\varphi (\lambda) = \theta (1 - e^{-\lambda})$. Then, for the inverse Poisson subordinator, we have that $L_t$ follows $\text{Gam} \left([t+1],1/\theta \right)$, see Leonenko et~al. \cite{leonenko2014correlation}.
Using the moment generating function of a Gamma random variable, we get
\begin{equation}
\eta_t(\lambda)= \int_{0}^{\infty}  {e^{-\lambda s}l_t(ds)} = \left( 1 + \frac{\lambda}{\theta} \right)^{-[t+1]},
\end{equation}
and thus
$U(0,s) = \E[L_s] = \frac{[s+1]}{\theta}.$
Then, it follows from Corollary \ref{cor:corr_sub} that for any $m,n \in \NN$ and $t \geq s>0$,
\begin{eqnarray*}
\rho_{\nu} \left(\Poly_m \left(X_{\Tau_t}\right), \Nu_n \left(X_{\Tau_s}\right) \right) &=& e^{-\theta (1 - e^{-\lambda_m}) (t-s)} \kappa_{\nu}^{-1}(m)\delta_{mn},\\
\rho_{\nu} \left(\Poly_m \left(X_{\Tau_t}\right), \Poly_n \left(X_{\Tau_s}\right) \right) &=& e^{-\theta (1 - e^{-\lambda_m})(t-s)}c_{\nu}(n,m).
\end{eqnarray*}
Now, when $t \rightarrow \infty$, note that
$\eta_t(\lambda) \sim \left( 1 + \frac{\lambda}{\theta} \right)^{-t}.$
Consequently $\lim_{t \rightarrow \infty}\frac{\eta_{t-s}(\lambda_m)}{\eta_t(\lambda_m)} \neq 1$, and therefore \eqref{eq:assympt} and \eqref{eq:assympt_bi} do not hold. However, we are able to compute the exact formulas for the (biorthogonal) spectral projections correlation functions of $X_{L}$ as follows. First, noting that for any $k \in \N$ such that $k < t$, $-[t-k+1]=k-[t+1]$, we have
\begin{eqnarray}
\lambda_m \int_{0}^s \eta_{t-r}(\lambda_m)U(dr)+\eta_t(\lambda_m) &=& \frac{\lambda_m}{\theta} \sum_{k = 0}^{[s]} \left(1 + \frac{\lambda_m}{\theta} \right)^{-[t-k+1]} +  \left(1 + \frac{\lambda_m}{\theta} \right)^{-[t+1]} \nonumber \\
&=& \left(1 + \frac{\lambda_m}{\theta} \right)^{-[t+1]} \left(2 - \left(1 + \frac{\lambda_m}{\theta} \right)^{[s+1]} \right). \label{eq:poisson}
\end{eqnarray}
Thus, it follows from Theorem \ref{thm:t-ch} that
\begin{eqnarray*}
\rho_{\nu}(\Poly_m(X_{L_t}), \Poly_n(X_{L_s})) &=& c_{\nu}(n,m) \left(1 + \frac{\lambda_m}{\theta} \right)^{-[t+1]} \left(2 - \left(1 + \frac{\lambda_m}{\theta} \right)^{[s+1]} \right), \\
\rho_{\nu}(\Poly_m(X_{L_t}), \Nu_n(X_{L_s})) &=& \kappa_{\nu}^{-1}(m) \delta_{mn} \left(1 + \frac{\lambda_m}{\theta} \right)^{-[t+1]} \left(2 - \left(1 + \frac{\lambda_m}{\theta} \right)^{[s+1]} \right).
\end{eqnarray*}
Since the spectral projections correlation functions decay in an exponential rate, the time-changed process $X_{L}$ exhibits short-range dependence although it is non-Markovian.
\end{example}

\subsection{A short review of the generalized Laguerre semigroups} \label{section:gL}
In this section, we provide a short description of the so-called generalized Laguerre processes introduced and studied by Patie and Savov~\cite{patie2015spectral}, see also Patie~et~al.~\cite{patie_savov_zhao}. We point out that these processes have been recently used to model asset price dynamics in Jarrow~et~al.~\cite{risk_neutral}.
To this end, let $\tilde{\mathbf{A}}$ be the infinitesimal generator of classical Laguerre process which in financial literature is known as a Cox-Ingersoll-Ross (CIR) process, i.e.~for at least $f \in C^2_0(\R_+)$, we have
\begin{equation} \label{eq:CIR-gen}
\tilde{\mathbf{A}}f(x) = \sigma^2 x f''(x)+(\beta+\sigma^2-x)f'(x),
\end{equation}
where $\beta, \sigma \geq 0$. We say that a semigroup $P = (P_t)_{t \geq 0}$ is a \emph{generalized Laguerre (gL) semigroup} if its infinitesimal generator is given, for a smooth function $f$ on $x >0$, by
\begin{equation}\label{eq:gCIR-gen}
\mathbf{A}f(x) = \tilde{\mathbf{A}}f(x)+\int_0^{\infty}\left( f(e^{-y}x)-f(x)+yxf'(x) \right)\Pi(x,dy),
\end{equation}
where $\Pi(x,dy) = \frac{\Pi(dy)}{x}$, with ${\Pi}$ being a L\'evy measure concentrated on $(0,\infty)$ and satisfying the integrability condition $\int_0^{\infty}(y^2 \wedge y)\Pi(dy)<\infty$. We call the corresponding process $X=(X_t)_{t \geq 0}$ a \emph{generalized Laguerre process}. Note that when $\Pi(0,\infty)=0$, then $P$ boils down to the semigroup of a classical Laguerre process. Moreover, from ~\cite[Theorem 1.6]{patie2015spectral} we have that the semigroup $P$ admits a unique invariant measure, which in this case is absolutely continuous with a density that we denote by $\nu$, and write the Hilbert space $L^2(\nu)$ as in Section \ref{intro}. Recall that $P$ can be extended to a contraction semigroup in $L^2(\nu)$, and by an abuse of notation, we still denote it by $P$. Now,~\cite[Theorem 1.11]{patie2015spectral} yields that if $\overline{\Pi}(y) = \int_y^{\infty}\Pi(dr)$ is strongly regularly varying at $0$ with some index $\alpha \in (0,1)$, then, for any $f \in L^2(\nu)$ and $t > T_{\Pi}$ for some explicit $T_{\Pi}$ (with $T_{\Pi}=0$ when $\sigma^2>0$), we have the following spectral expansion,
\begin{equation*}\label{eq:spect_exp}
P_tf=\sum_{n=0}^{\infty}e^{-nt} \langle f, \mathcal{V}_n \rangle_{\nu}\Poly_n \quad \text{in } L^2(\nu),
\end{equation*}
where $(\Poly_n, \Nu_n)_{n \geq 0}$ form a biorthogonal sequence of $L^2(\nu)$, and are expressed as follows:
\begin{equation*}
\Poly_n(x)= \sum_{k=0}^{n} (-1)^k \frac{\binom{n}{k}}{W_\phi (k+1)}x^k \in L^2(\nu),
\end{equation*}
and,
\begin{equation*}
\mathcal{V}_n(x) = \frac{\mathcal{R}^{(n)}\nu(x)}{\nu(x)} = \frac{(x^n\nu(x))^{(n)}}{n!\nu(x)} \in L^2(\nu),
\end{equation*}
with the last equation serving as a definition of the Rodrigues operator. Here, $W_{\phi}(1)=1$ and, for $n \in \mathbb{N}$, $W_{\phi}(n+1)=\prod_{k=1}^n \phi(k)$, where $\phi$ is the Bernstein function, see~\eqref{eq:varphi1}, which takes the form
\begin{equation*}
\phi(\lambda) = \beta + \sigma^2 \lambda + \int_0^{\infty} \left(1 - e^{-\lambda y} \right) \overline{\Pi}(y)dy,
\end{equation*}
with $\Pi, \beta, \sigma^2$ as in \eqref{eq:gCIR-gen}. Furthermore, by \cite[Theorem 7.3 and Proposition 8.4]{patie2015spectral} we have, that for any $n \geq 0$ and $t \geq 0$, $\Poly_n$ (resp.~$\Nu_n$) is an eigenfunction for $P_t$ (resp. $P_t^*$) associated to the eigenvalue $e^{-nt}$, i.e.~$\Poly_n, \Nu_n \in L^2(\nu)$ and
\begin{equation*}
P_t \Poly_n(x) = e^{-nt}\Poly_n(x) \quad \text{and}\quad P_t^* \Nu_n(x) = e^{-nt}\Nu_n(x),
\end{equation*}
with $(P^*_t)_{t \geq 0}$ being the adjoint of $(P_t)_{t \geq 0}$ in $L^2(\nu)$. Therefore, in this case, we have $\lambda_n = n$, $n \in \N$.\\
\\
Next, we describe the eigenvalue expansions of specific instances of the generalized Laguerre semigroups which illustrate the different situations that are ranging from the self-adjoint case to perturbation of a self-adjoint differential operator through non-local operators without diffusion component. We study their spectral projections correlation structure, and discuss some of their important properties as are range dependence and symmetry (self-adjointness), among others.

\subsection{The self-adjoint diffusion case}\label{section:self_adj} For any $\beta>0$,  the infinitesimal generator of the classical  Laguerre process takes the form
\begin{equation*}
\mathbf{A}_{\beta} f(x) = xf''(x)+(\beta+1-x)f'(x).
\end{equation*}
Note that this is the infinitesimal generator of a one-dimensional diffusion often referred in the literature as the CIR process. The eigenfunctions are given by
\begin{equation*}
\overline{\mathcal{L}}^{(\beta)}_n(x) =\sqrt{\mathfrak{c}_n(\beta)}
\mathcal{L}^{(\beta)}_n(x),
\end{equation*}
where $\mathfrak{c}_n(\beta)= \frac{\Gamma(n+1)\Gamma(\beta+1)}{\Gamma(n+\beta+1)}$ and $\mathcal{L}_n^{(\beta)}(x) = \sum_{k=0}^{n}(-1)^{k} \binom{n+\beta}{n-k}\frac{x^k}{k!}$ is the associated Laguerre polynomial of order $\beta$. Denote by
\begin{equation}\label{eq:gamma_beta}
\gamma_{\beta}(dx)=\frac{x^{\beta} e^{-x}}{\Gamma(\beta+1)}dx, \quad x>0,
\end{equation}
the law of a Gamma random variable with parameter $(\beta+1)$. Then, the semigroup  is self-adjoint in $L^2(\gamma_{\beta})$ and the sequence $(\overline{\mathcal{L}}^{(\beta)}_n)_{n \geq 0}$ forms an orthonormal basis of $L^2(\gamma_{\beta})$.
In particular, this means that, for $n,m \in \N$, we have that
\begin{equation*}
\kappa_{\gamma_{\beta}}(m) = 1 \quad \text{and}\quad c_{\gamma_{\beta}}(n,m) = \delta_{nm}.
\end{equation*}
Hence, it follows from Theorem ~\ref{thm:corr}, that for $m,n\in \NN$ and $t \geq s > 0$,
\begin{equation*}
\rho_{\gamma_{\beta}}(\overline{\mathcal{L}}^{(\beta)}_m(X_t), \overline{\mathcal{L}}^{(\beta)}_n(X_s)) = e^{-m(t-s)}\delta_{nm}.
\end{equation*}
Now, from Corollary \ref{cor:corr_sub}, Theorem \ref{thm:t-ch} and examples \ref{ex:alpha_st} and \ref{ex:poisson} we have the following additional results.
\begin{itemize}
\item Let $\Tau$ be an \textbf{$\alpha$-stable subordinator}, i.e.~$\varphi(\lambda)=\lambda^{\alpha}$, $0<\alpha<1$, see Example~\ref{ex:alpha_st}. Then, for any $t \geq s > 0$,
\begin{equation*}
 \rho_{\gamma_{\beta}}(\overline{\mathcal{L}}^{(\beta)}_m(X_{\Tau_t}), \overline{\mathcal{L}}^{(\beta)}_n(X_{\Tau_s})) = e^{-m^{\alpha}(t-s)} \delta_{nm}.
\end{equation*}
\item Let $\Tau$ be a \textbf{Poisson subordinator} with parameter $\theta$, i.e.~$\varphi(\lambda)=\theta(1-e^{-\lambda})$, see Example~\ref{ex:poisson}. Then, for any $t \geq s > 0$,
 \begin{equation*}
 \rho_{\gamma_{\beta}}(\overline{\mathcal{L}}^{(\beta)}_m(X_{\Tau_t}), \overline{\mathcal{L}}^{(\beta)}_n(X_{\Tau_s})) = e^{-\theta(1-e^{-m})(t-s)} \delta_{nm}.
\end{equation*}
\item Let $L$ be the \textbf{inverse of an $\alpha$-stable subordinator}, see Example~\ref{ex:alpha_st}. Then,
\begin{equation*}
\rho_{\gamma_{\beta}} \left(\overline{\mathcal{L}}^{(\beta)}_m \left(X_{L_t}\right) ,\overline{\mathcal{L}}^{(\beta)}_n \left(X_{L_s}\right) \right) =\frac{\delta_{nm}\ m  t^\alpha}{\Gamma(\alpha)}\int_0^{s/t}{\frac{E_\alpha(-m t^\alpha(1-z)^\alpha)}{z^{1-\alpha}}dz} +\delta_{nm} E_\alpha(-m t^\alpha).
\end{equation*}
Furthermore, for a fixed $s>0$, when $t \rightarrow \infty$,
\begin{equation*}
\rho_{\gamma_{\beta}}(\overline{\mathcal{L}}^{(\beta)}_m(X_{L_t}), \overline{\mathcal{L}}^{(\beta)}_n(X_{L_s})) \sim  \frac{\delta_{nm} }{\Gamma(1-\alpha)t^{\alpha}} \left( \frac{1}{m} + \frac{s^{\alpha}}{\Gamma(1+\alpha)} \right).
\end{equation*}
\item Let $L$ be the \textbf{inverse of a Poisson subordinator} with parameter $\theta$, see Example~\eqref{ex:poisson}. Then, for any $t \geq s > 0$,
\begin{equation*}
\rho_{\gamma_{\beta}}(\overline{\mathcal{L}}^{(\beta)}_m(X_{L_t}), \overline{\mathcal{L}}^{(\beta)}_n(X_{L_s})) = \delta_{nm} \left(1 + \frac{m}{\theta} \right)^{-[t+1]} \left(2 - \left(1 + \frac{m}{\theta} \right)^{[s+1]} \right).
\end{equation*}
\end{itemize}

\subsection{Small perturbation of the Laguerre semigroup.}\label{section:small_pert}
Let $b \geq 1$, and take $\sigma^2=1$, $\beta = \frac{b^2-1}{b}$ and $\overline{\Pi}(y)=e^{-b y}$, $y \geq 0$ in \eqref{eq:gCIR-gen}, i.e.~we consider, for $f$ smooth,
\begin{equation*}
\mathbf{A}^{(b)}f(x)=xf''(x)+ \left( \frac{{b}^2-1}{b} +1-x \right)f'(x)+\frac{b}{x}\int_0^{\infty}(f(e^{-y}x)-f(x)+yxf'(x))e^{-b y}dy.
\end{equation*}
The associated semigroup  is ergodic with a unique invariant measure $\nu_b$,
$$\nu_b(dx)=\frac{(1+x)}{b+1}\gamma_{b-1}(dx), \quad x>0,$$
with $\gamma_{b-1}(dx)$ as in ~\eqref{eq:gamma_beta}. Then, the eigenfunctions and co-eigenfunctions $(\Poly^{(b)}_n, \mathcal{V}^{(b)}_n)_{n \geq 0}$ are expressed in terms of Laguerre polynomials $\left( \mathcal{L}_n^{(b)} \right)_{n \geq 0}$ as follows, $n \geq 0$,
\begin{eqnarray*}
\Poly^{(b)}_n(x) &=& \mathfrak{c}_n(b+1)\mathcal{L}_{n}^{(b+1)}(x)-\frac{\mathfrak{c}_n(b+1)}{b}x\mathcal{L}_{n-1}^{(b+2)}(x) , \label{eq:eigen_perturb}\\
\mathcal{V}^{(b)}_n(x)&=&\frac{1}{x+1}\mathcal{L}_{n}^{(b-1)}(x)+\frac{x}{x+1}\mathcal{L}_{n}^{(b)}(x),\label{eq:coeigen_perturb}
\end{eqnarray*}
where $\mathfrak{c_n(\cdot)}$ and $\left( \mathcal{L}_n^{(b)} \right)_{n \geq 0}$ are defined as in Section~\ref{section:self_adj}, see \cite[Example 3.2]{patie2015spectral}. Next, it follows from \cite[Theorem 2.2 and (3.9)]{patie2015spectral} that
\begin{equation*}
\Vert \Poly^{(b)}_n \Vert_{\nu_b} = O(1), \quad \Vert \Nu^{(b)}_n \Vert_{\nu_b} = O(n^{(b + 1)/2}).
\end{equation*}
Then, for any $m \in \N$, we have
\begin{eqnarray*}
\kappa_{\nu_b}(m) &=& O(m^{(b + 1)/2}),
\end{eqnarray*}
and therefore, the biorthogonal sequence $(\Poly^{(b)}_n, \Nu^{(b)}_n)_{n \in \N}$ is tame, see Remark~\ref{rem:tame} for definition. Thus, it follows from Theorem \ref{thm:corr} that
\begin{equation*}
\rho_{\nu_b}(\Poly^{(b)}_m(X_t), \Nu^{(b)}_n(X_s)) = e^{-m(t-s)} \kappa_{\nu_b}^{-1}(m)  \delta_{nm}.
\end{equation*}
Now, from Corollary ~\ref{cor:corr_sub}, Theorem ~\ref{thm:t-ch} and examples ~\ref{ex:alpha_st} and ~\ref{ex:poisson} we have the following results.
\begin{itemize}
\item Let $\Tau$ be an \textbf{$\alpha$-stable subordinator}, i.e.~$\varphi(\lambda)=\lambda^{\alpha}$, $0<\alpha<1$, see Example~\ref{ex:alpha_st}. Then, for any $t \geq s>0$,
\begin{eqnarray*}
 \rho_{\nu_b}(\Poly^{(b)}_m(X_{\Tau_t}), \Nu^{(b)}_n(X_{\Tau_s}))= e^{-m^{\alpha}(t-s)}\kappa_{\nu_b}^{-1}(m) \delta_{nm}.
\end{eqnarray*}
\item Let $\Tau$ be a \textbf{Poisson subordinator} with parameter $\theta$, i.e.~$\varphi(\lambda)=\theta(1-e^{-\lambda})$, see Example~\ref{ex:poisson}. Then, for any $t \geq s > 0$
\begin{eqnarray*}
\rho_{\nu_b}(\Poly^{(b)}_m(X_{\Tau_t}), \Nu^{(b)}_n(X_{\Tau_s})) = e^{-\theta(1-e^{-m})(t-s)} \kappa_{\nu_b}^{-1}(m) \delta_{nm}.
\end{eqnarray*}
\item Let $L$ be the \textbf{inverse of an $\alpha$-stable subordinator}, see Example~\ref{ex:alpha_st}. Then, for a fixed $s>0$, when $t \rightarrow \infty$,
\begin{eqnarray*}
\rho_{\nu_b}(\Poly^{(b)}_m(X_{L_t}), \Nu^{(b)}_n(X_{L_s})) \sim \frac{\kappa_{\nu_b}^{-1}(m)\delta_{mn} }{\Gamma(1-\alpha)t^{\alpha}} \left( \frac{1}{m} + \frac{s^{\alpha}}{\Gamma(1+\alpha)} \right).
\end{eqnarray*}
\item Let $L$ be the \textbf{inverse of a Poisson subordinator} with parameter $\theta$, see Example~\ref{ex:poisson}. Then, for any $t \geq s > 0$,
\begin{eqnarray*}
\rho_{\nu_b}(\Poly^{(b)}_m(X_{L_t}), \Nu^{(b)}_n(X_{L_s})) = \kappa_{\nu_b}^{-1}(m)\delta_{mn}\left(1 + \frac{m}{\theta} \right)^{-[t+1]} \left(2 - \left(1 + \frac{m}{\theta} \right)^{[s+1]} \right).
\end{eqnarray*}
\end{itemize}

\subsection{The Gauss-Laguerre semigroup.}\label{section:big_jumps}
We next consider the Gauss-Laguerre semigroup $P^{\alpha, b} = (P^{\alpha, b}_t)_{t \geq 0}$ which has been introduced and extensively studied in~\cite{patie_gauss_laguerre}, and which is an instance of the generalized Laguerre semigroups, see \cite[Example 3.3]{patie2015spectral}. In particular, its infinitesimal generator, for any $\alpha \in (0,1)$ and $b \in [1-\frac{1}{\alpha},\infty]$, and for any given smooth function $f$, takes the form
\begin{equation*}
\mathbf{A}^{(\alpha,b)}f(x) = (b_{\alpha}-x)f'(x)+\frac{\sin(\alpha \pi)}{\pi}x\int_0^1 f''(xy)g_{\alpha, b}(y)dy, \ x>0,
\end{equation*}
where $b_{\alpha} = \frac{\Gamma(\alpha b + \alpha + 1)}{\Gamma(\alpha b + 1)}$ and
\[ g_{\alpha, b}(y)=\frac{\Gamma(\alpha)}{b + \frac{1}{\alpha}+1}y^{b + \frac{1}{\alpha}+1}{}_2F_1(\alpha(b+1)+1,\alpha+1;\alpha(b+1)+2;y^{\frac{1}{\alpha}}), \]
with ${}_2F_1$ the Gauss hypergeometric function. The associated semigroup $P^{\alpha, b} = (P^{\alpha, b}_t)_{t \geq 0}$ is a non-self-adjoint contraction in $L^2(\mathbf{e}_{\alpha,b})$, where
\[ \mathbf{e}_{\alpha,b}(dx) = \frac{x^{b+\frac{1}{\alpha}-1}e^{-x^{\frac{1}{\alpha}}}}{\Gamma(\alpha b + 1)}dx, \ x>0, \]
is its unique invariant measure. For any $x \geq 0$, we set $\mathcal{P}^{(\alpha,b)}_0(x) = 1$ and for any $n \geq 1$,
\begin{eqnarray*}
\mathcal{P}^{(\alpha,b)}_n(x) &=& \Gamma(\alpha b +1)\sum_{k=0}^{n}(-1)^k \frac{\binom{n}{k}}{\Gamma(\alpha k + \alpha b + 1)}x^k, \\
\Nu^{(\alpha,b)}_n(x) &=& \frac{(-1)^n}{n!\ \mathbf{e}_{\alpha, b}(x)}(x^n \mathbf{e}_{\alpha, b}(x))^{(n)},
\end{eqnarray*}
which are the eigenfunctions and co-eigenfunctions of $P^{\alpha, b}$. It is worth mentioning that in~\cite[Proposition 3.3]{patie_gauss_laguerre} the authors show that the $\Nu^{(\alpha,b)}_n$'s can be expressed in terms of sequences of polynomials as well. Then, it follows from \cite[Proposition 2.3]{patie_gauss_laguerre} and \cite[Theorem 2.2]{patie2015spectral} that
\begin{equation*}
\Vert \Poly^{(\alpha,b)}_n \Vert_{\mathbf{e}_{\alpha, b}} = O(1),\quad \Vert \Nu^{(\alpha,b)}_n \Vert_{\mathbf{e}_{\alpha, b}} = O\left(e^{T_{\alpha}n}\right),
\end{equation*}
where $T_{\alpha}=-\ln (2^{\alpha}-1)$. Then, we have that for any $m \in \N$,
\begin{eqnarray*}
\kappa_{\mathbf{e}_{\alpha,b}}(m) = O\left(e^{T_{\alpha}m}\right),
\end{eqnarray*}
hence, in this case. the biorthogonal sequence $(\Poly^{(\alpha,b)}_n, \Nu^{(\alpha,b)}_n)_{n \in \N}$ is a  wild system in $L^2(\mathbf{e}_{\alpha, b})$. Now, it follows from Theorem ~\ref{thm:corr} that
\begin{eqnarray*}
\rho_{\mathbf{e}_{\alpha,b}}(\Nu^{(\alpha,b)}_n(X_s), \Poly^{(\alpha,b)}_m(X_t)) = e^{-m(t-s)}  \kappa_{\mathbf{e}_{\alpha,b}}^{-1}(m) \delta_{nm}.
\end{eqnarray*}
Next, from Corollary ~\ref{cor:corr_sub}, Theorem ~\ref{thm:t-ch} and examples ~\ref{ex:alpha_st} and ~\ref{ex:poisson} we have the following additional results.
\begin{itemize}
\item Let $\Tau$ be an \textbf{$\alpha$-stable subordinator}, i.e.~$\varphi(\lambda)=\lambda^{\alpha}$, $0<\alpha<1$, see Example~\ref{ex:alpha_st}. Then, for any $t \geq s>0$,
\begin{eqnarray*}
 \rho_{\mathbf{e}_{\alpha,b}}(\Poly^{(\alpha,b)}_m(X_{\Tau_t}), \Nu^{(\alpha,b)}_n(X_{\Tau_s})) = e^{-m^{\alpha}(t-s)} \kappa_{\mathbf{e}_{\alpha,b}}^{-1}(m)  \delta_{nm}.
\end{eqnarray*}
\item Let $\Tau$ be a \textbf{Poisson subordinator} with parameter $\theta$, i.e.~$\varphi(\lambda)=\theta(1-e^{-\lambda})$, see Example~\ref{ex:poisson}. Then, for any $ t \geq s > 0$
 \begin{eqnarray*}
 \rho_{\mathbf{e}_{\alpha,b}}(\Poly^{(\alpha,b)}_m(X_{\Tau_t}), \Nu^{(\alpha,b)}_n(X_{\Tau_s})) = e^{-\theta(1-e^{-m})(t-s)}  \kappa_{\mathbf{e}_{\alpha,b}}^{-1}(m) \delta_{nm}.
\end{eqnarray*}
\item Let $L$ be the \textbf{inverse of an $\alpha$-stable subordinator}, see Example~\ref{ex:alpha_st}. Then, for a fixed $s>0$, when $t \rightarrow \infty$,
\begin{eqnarray*}
\rho_{\mathbf{e}_{\alpha,b}}(\Poly^{(\alpha,b)}_m(X_{L_t}), \Nu^{(\alpha,b)}_n(X_{L_s})) \sim  \frac{\kappa_{\mathbf{e}_{\alpha,b}}^{-1}(m)\delta_{mn}}{\Gamma(1-\alpha)t^{\alpha}} \left( \frac{1}{m} + \frac{s^{\alpha}}{\Gamma(1+\alpha)} \right).
\end{eqnarray*}
\item Let $L$ be the \textbf{inverse of a Poisson subordinator} with parameter $\theta$, see Example~\ref{ex:poisson}. Then, for any $t \geq s > 0$,
\begin{eqnarray*}
\rho_{\mathbf{e}_{\alpha,b}}(\Poly^{(\alpha,b)}_m(X_{L_t}), \Nu^{(\alpha,b)}_n(X_{L_s})) = \kappa_{\mathbf{e}_{\alpha,b}}^{-1}(m)\delta_{mn}\left(1 + \frac{m}{\theta} \right)^{-[t+1]} \left(2 - \left(1 + \frac{m}{\theta} \right)^{[s+1]} \right).
\end{eqnarray*}
\end{itemize}

\section{Proofs of the main results}\label{section:proofs}
\subsection{Proof of Theorem \ref{thm:corr}}\label{sec:proof_corr}
We split the proof of Theorem ~\ref{thm:corr} into several intermediary lemmas.
\begin{lemma}\label{lemma:biorth}
The sequence $(\Poly_n, \Nu_n)_{n \in \mathtt{N}}$, defined in Assumption \ref{assmp1}, form a biorthogonal sequence in $L^2(\nu)$, i.e.~ for any $n, m \in \mathtt{N}$,
\begin{equation}
\langle \Poly_n, \Nu_m \rangle_{\nu} = \delta_{nm}.
\end{equation}
\end{lemma}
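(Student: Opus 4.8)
The plan is to show biorthogonality by exploiting the fact that $\Poly_n$ and $\Nu_m$ are eigenfunctions of the operator $\PP_t$ and its adjoint $\PP_t^*$ respectively, associated to the \emph{distinct} eigenvalues $e^{-\boldsymbol{\lambda}_n t}$ and $e^{-\boldsymbol{\lambda}_m t}$. The classical trick for biorthogonality of eigenvectors of an operator and its adjoint is to compute a single inner product $\langle \PP_t \Poly_n, \Nu_m \rangle_{\nu}$ in two different ways and compare. First I would apply the eigenvalue relation \eqref{eq:eigen} to evaluate the left-hand side directly:
\[
\langle \PP_t \Poly_n, \Nu_m \rangle_{\nu} = \langle e^{-\boldsymbol{\lambda}_n t}\Poly_n, \Nu_m \rangle_{\nu} = e^{-\boldsymbol{\lambda}_n t}\langle \Poly_n, \Nu_m \rangle_{\nu}.
\]
Next, using the definition of the adjoint \eqref{eq:adjoint} together with the co-eigenfunction relation \eqref{eq:coeigen}, I would move the operator onto the second argument:
\[
\langle \PP_t \Poly_n, \Nu_m \rangle_{\nu} = \langle \Poly_n, \PP_t^* \Nu_m \rangle_{\nu} = \langle \Poly_n, e^{-\boldsymbol{\lambda}_m t}\Nu_m \rangle_{\nu} = e^{-\boldsymbol{\lambda}_m t}\langle \Poly_n, \Nu_m \rangle_{\nu}.
\]

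Subtracting these two expressions yields, for every $t \geq 0$,
\[
\bigl( e^{-\boldsymbol{\lambda}_n t} - e^{-\boldsymbol{\lambda}_m t} \bigr)\langle \Poly_n, \Nu_m \rangle_{\nu} = 0.
\]
When $n \neq m$, the eigenvalues $\boldsymbol{\lambda}_n$ and $\boldsymbol{\lambda}_m$ are distinct by Assumption~\ref{assmp1}, so the prefactor $e^{-\boldsymbol{\lambda}_n t} - e^{-\boldsymbol{\lambda}_m t}$ is nonzero for any fixed $t > 0$ (indeed for all $t>0$). Dividing through forces $\langle \Poly_n, \Nu_m \rangle_{\nu} = 0$, which handles the off-diagonal case. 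For the diagonal case $n = m$, the conclusion $\langle \Poly_n, \Nu_n \rangle_{\nu} = 1$ is exactly the normalization imposed just before the statement of the lemma (the reduction to $\langle \Poly_n, \Nu_n \rangle_{\nu} = 1$ via rescaling by $\sqrt{|a_n|}$). Combining the two cases gives $\langle \Poly_n, \Nu_m \rangle_{\nu} = \delta_{nm}$ as claimed.

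I do not anticipate a genuine obstacle here, as this is a standard computation; the only point requiring a modicum of care is justifying that all inner products involved are finite so that the manipulations are legitimate. This is guaranteed because $\Poly_n, \Nu_m \in L^2(\nu)$ by Assumption~\ref{assmp1}, and $\PP_t$ extends to a bounded (contraction) operator on $L^2(\nu)$ as established in the preliminaries, so $\PP_t \Poly_n \in L^2(\nu)$ and the adjoint identity \eqref{eq:adjoint} applies verbatim. One small subtlety worth flagging is that the argument uses the distinctness of eigenvalues in an essential way; if eigenvalues were allowed to coincide across different indices the prefactor could vanish and biorthogonality would fail, which is precisely why the multiplicity-one hypothesis on $(\boldsymbol{\lambda}_n)_{n \in \NN}$ was built into Assumption~\ref{assmp1}.
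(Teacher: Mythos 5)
Your proposal is correct and follows essentially the same route as the paper's proof: both compute $\langle \PP_t \Poly_n, \Nu_m \rangle_{\nu}$ in two ways via the eigenrelations \eqref{eq:eigen}--\eqref{eq:coeigen} and the adjoint identity \eqref{eq:adjoint}, deduce $\bigl(e^{-\boldsymbol{\lambda}_n t} - e^{-\boldsymbol{\lambda}_m t}\bigr)\langle \Poly_n, \Nu_m \rangle_{\nu} = 0$, and invoke the distinctness of the eigenvalues to kill the off-diagonal terms, with the diagonal case handled by the normalization fixed before the lemma. Your added remarks on finiteness of the inner products (via $\Poly_n, \Nu_m \in L^2(\nu)$ and the contraction property of $\PP_t$) are a small but welcome refinement of the paper's argument.
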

\begin{proof}
First, recall that in Section \ref{sec:prelim} we assumed, without loss of generality, that for any $n \in \NN$, $\langle \Poly_n, \Nu_n \rangle_{\nu} =1$. Therefore, we need to show that $\langle \Poly_n, \Nu_m \rangle_{\nu} =0$ when $n \neq m$. Then, note that for all $t \geq 0$ and $m, n \in \NN$,
\begin{equation*}
\langle \Poly_n, \Nu_m \rangle_{\nu} = e^{\lambda_n t} \langle P_t \Poly_n, \Nu_m \rangle_{\nu} =  e^{\lambda_n t} \langle \Poly_n, P_t^* \Nu_m \rangle_{\nu} = e^{(\lambda_n -\lambda_m) t} \langle \Poly_n,  \Nu_m \rangle_{\nu}
\end{equation*}
where in the first and last equality we used \eqref{eq:eigen} and \eqref{eq:coeigen} respectively. Therefore,
\begin{equation*}
\left( 1 - e^{(\lambda_n -\lambda_m) t} \right) \langle \Poly_n, \Nu_m \rangle_{\nu} = 0.
\end{equation*}
Hence, since we assumed that the eigenvalues are of multiplicity $1$, $\lambda_n \neq \lambda_m$ if $n \neq m$. Thus, $\langle \Poly_n, \Nu_m \rangle_{\nu} = 0$, which concludes the proof.
\end{proof}

\begin{lemma}\label{lemma:std}
Let $f \in L^2(\nu)$. Then, for any $t \geq 0$,
\begin{equation}
std_{\nu}(f(X_t)) = \sqrt{\nu f^2 - (\nu f)^2}=\sqrt{\nu f^2}.
\end{equation}
In particular, if $f$ is such that $\nu f = 0$, then
\begin{equation}
std_{\nu}(f(X_t)) = \Vert f \Vert_{\nu}.
\end{equation}
\end{lemma}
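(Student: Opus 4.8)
The plan is to reduce the standard deviation to first and second moments under $\Prob_\nu$ and then to invoke the marginal stationarity \eqref{eq:inv_mes}. First, straight from the definition of $std_{\nu}$ together with \eqref{eq:def_cov}, I would write
\[ std_{\nu}^2(f(X_t)) = \C_\nu(f(X_t),f(X_t)) = \E_\nu[f^2(X_t)] - \bigl(\E_\nu[f(X_t)]\bigr)^2. \]
By the definition of $\E_\nu$ and of the operators $P_t$, the two moments appearing here are exactly $\E_\nu[f(X_t)] = \nu P_t f$ and $\E_\nu[f^2(X_t)] = \nu P_t(f^2)$, so the whole computation reduces to evaluating $\nu P_t g$ for $g \in \{f, f^2\}$.

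The second step is to show $\nu P_t g = \nu g$ for these two choices of $g$. Here I would use that $\nu$ is a probability measure, so that $f \in L^2(\nu) \subseteq L^1(\nu)$ and, since $\int_E f^2 \, d\nu < \infty$, also $f^2 \in L^1(\nu)$. The marginal stationary property \eqref{eq:inv_mes} holds a priori on $\mathcal{B}_b(E)$; taking $f = \mathbbm{1}_B$ there shows that the law of $X_t$ under $\Prob_\nu$ is exactly $\nu$. Consequently $\E_\nu[g(X_t)] = \nu g$ for every $\nu$-integrable $g$, in particular for $f$ and $f^2$; formally this is a monotone-class / approximation argument, truncating $g$ by bounded functions and passing to the limit using $g \in L^1(\nu)$.

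Combining the two steps yields $std_{\nu}^2(f(X_t)) = \nu f^2 - (\nu f)^2$, which is the first assertion. For the displayed special case, if $\nu f = 0$ then $std_{\nu}^2(f(X_t)) = \nu f^2 = \langle f, f\rangle_\nu = \Vert f \Vert_\nu^2$, so that $std_{\nu}(f(X_t)) = \Vert f \Vert_\nu$.

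I expect the only delicate point to be the extension of \eqref{eq:inv_mes} from bounded Borel functions to the possibly unbounded $f$ and to $f^2$; everything else is bookkeeping. This obstacle is resolved cleanly by reading \eqref{eq:inv_mes} as the identity in law $X_t \sim \nu$ under $\Prob_\nu$, after which the integrability of $f$ and $f^2$ guaranteed by $f \in L^2(\nu)$ and the finiteness of $\nu$ closes the argument.
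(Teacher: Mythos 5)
Your proof is correct and follows essentially the same route as the paper's: expand $std_{\nu}^2(f(X_t)) = \nu P_t (f^2) - (\nu P_t f)^2$ from the definition of the covariance and then use the invariance of $\nu$ to replace $\nu P_t g$ by $\nu g$ for $g \in \{f, f^2\}$. The only difference is that you explicitly justify extending the invariance identity from bounded Borelian functions to $f$ and $f^2 \in L^1(\nu)$ by identifying the law of $X_t$ under $\Prob_{\nu}$ with $\nu$ — a point the paper's one-line proof leaves implicit.
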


\begin{proof}
The first claim immediately follows, for any $t \geq 0$, from the sequence of equalities
\begin{equation*}
std_{\nu}(f(X_t)) = \sqrt{\nu P_t f^2 - (\nu P_t f)^2} =  \sqrt{\nu f^2 - (\nu f)^2}.
\end{equation*}
Finally, if $\nu f = 0$, then we have
\begin{equation}\label{eq:std}
std_{\nu}(f(X_t))=\sqrt{\nu f^2} =\Vert f \Vert_{\nu}.
\end{equation}
\end{proof}

\begin{lemma}\label{lemma:cov}
Let $f \in L^2(\nu)$. Then, for any $m \in \NN$ and $t \geq s > 0$,
\begin{equation*}
\C_{\nu}(\Poly_m(X_t), f(X_s)) = e^{-\lambda_m (t-s)} \langle \Poly_m, f \rangle_{\nu}.
\end{equation*}
\end{lemma}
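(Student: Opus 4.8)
The plan is to expand the definition \eqref{eq:def_cov} of the covariance under $\nu$ and to treat the two resulting terms separately, relying on three ingredients: the Markov property of $X$, the eigenfunction relation \eqref{eq:eigen}, and the stationarity \eqref{eq:inv_mes} of $\nu$.

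First I would handle the cross term $\E_{\nu}[\Poly_m(X_t)f(X_s)]$. Since $t \geq s$, conditioning on $\F_s$ and invoking the Markov property gives $\E_{\nu}[\Poly_m(X_t)\mid \F_s] = (P_{t-s}\Poly_m)(X_s)$, which by \eqref{eq:eigen} equals $e^{-\lambda_m(t-s)}\Poly_m(X_s)$. Pulling out this deterministic factor and applying the tower property reduces the cross term to $e^{-\lambda_m(t-s)}\E_{\nu}[(\Poly_m f)(X_s)]$. Because $\nu$ is stationary, $X_s$ has law $\nu$ under $\Prob_{\nu}$, so $\E_{\nu}[(\Poly_m f)(X_s)] = \nu(\Poly_m f) = \langle \Poly_m, f \rangle_{\nu}$. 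These manipulations are legitimate since $\Poly_m, f \in L^2(\nu)$ and $P_{t-s}$ is a contraction on $L^2(\nu)$, so all the conditional expectations are well defined.

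It then remains to show that the product of marginal means $\E_{\nu}[\Poly_m(X_t)]\,\E_{\nu}[f(X_s)] = (\nu\Poly_m)(\nu f)$ vanishes; this is the only non-mechanical step, and where I expect the sole subtlety to lie. Applying $\nu$ to the eigenrelation $P_t\Poly_m = e^{-\lambda_m t}\Poly_m$ and using stationarity $\nu P_t\Poly_m = \nu\Poly_m$ yields $e^{-\lambda_m t}\nu\Poly_m = \nu\Poly_m$ for all $t \geq 0$; since the eigenvalues are distinct, the value $\lambda=0$ is attained only by the constant eigenfunction (for which $std_{\nu}$ degenerates and the statement is trivial), and for every other $m$ this forces $\nu\Poly_m = 0$. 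Equivalently, one notes that $\mathbbm{1}$ is the co-eigenfunction attached to the eigenvalue $0$ and invokes the biorthogonality of Lemma~\ref{lemma:biorth} to obtain $\nu\Poly_m = \langle \Poly_m, \mathbbm{1} \rangle_{\nu} = \delta_{m0}$. Combining the two terms then gives $\C_{\nu}(\Poly_m(X_t), f(X_s)) = e^{-\lambda_m(t-s)}\langle \Poly_m, f \rangle_{\nu}$, as claimed.
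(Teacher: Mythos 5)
Your proof is correct and follows essentially the same route as the paper's: the cross term is computed via the Markov property, the eigenrelation \eqref{eq:eigen} and the invariance of $\nu$, and the product of means is eliminated by showing $\nu\Poly_m=0$. Your direct derivation of $\nu\Poly_m=0$ from $e^{-\lambda_m t}\nu\Poly_m=\nu P_t\Poly_m=\nu\Poly_m$ is only a minor variant of the paper's appeal to biorthogonality against $\mathbbm{1}$ as the co-eigenfunction of the eigenvalue $0$ (a route you also mention), and if anything you are more explicit than the paper in flagging the degenerate case $\lambda_m=0$, which the paper glosses over by writing $\langle\Poly_m,\mathbbm{1}\rangle_{\nu}=\delta_{0m}=0$.
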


\begin{proof}
First,  Lemma~\ref{lemma:biorth} yields that $\langle \Poly_m,\Nu_n \rangle_\nu = \delta_{mn}$, $m,n \in \NN$. In particular, since $\nu$ is invariant, for any $m \in \NN$, $\nu P_t \Poly_m = \nu \Poly_m = \langle \Poly_m, 1 \rangle_\nu =\delta_{0m}=0$, where we used the fact that the constant function $\mathbbm{1}$ is an eigenfunction for $P_t$ since $P_t \mathbbm{1} = \mathbbm{1}$ for all $t \geq 0$. Similarly, since $P_0 = P_0^* = \mathbbm{1}$, then $\nu \Poly_m = \nu \Nu_m =\delta_{0m}= 0$. Then, from the definition of the covariance function given in \eqref{eq:def_cov},  we obtain, for any $t \geq s > 0$,
\begin{eqnarray*}
\C_{\nu}(\Poly_m(X_t), f(X_s)) &=& \E_{\nu} [\Poly_m(X_t)f(X_s) ] - \E_{\nu} [\Poly_m(X_t)]\E_{\nu} [f(X_s)]\\
&=& \E_{\nu} [ \Poly_m(X_t)f(X_s)] - \nu P_t \Poly_m \ \nu P_s f  \\
&=&  \E_{\nu} [\Poly_m(X_t)f(X_s) ].
\end{eqnarray*}
Next, using the Markov property and \eqref{eq:eigen}, we get
\begin{eqnarray*}
\E_{\nu}[\Poly_m(X_{t})f(X_s)]
&=& \E_{\nu}[\E_{X_s}[\Poly_m(X_{t-s})]f(X_s)]  \\
&=& \E_{\nu}[P_{t-s} \Poly_m(X_s)f(X_s)]  \\
&=& e^{-\lambda_m (t-s)} \E_{\nu}[\Poly_m(X_{s})f(X_s)]  \\
&=& e^{-\lambda_m (t-s)} \nu P_s \Poly_m f = e^{-\lambda_m (t-s)} \nu \Poly_m f\\
&=& e^{-\lambda_m (t-s)} \langle \Poly_m, f \rangle_\nu,
\end{eqnarray*}
where in the second last equality we used the fact that $\nu$ is an invariant measure for $P$.
\end{proof}

We are now ready to prove Theorem \ref{thm:corr}. First, recall from the proof of Lemma~\ref{lemma:cov} that for any $m\in \NN$, $\nu \Poly_m = \nu \Nu_m = 0$.   Next, it follows from Lemma \ref{lemma:std} that for any $t \geq 0$ and $m \in \NN$,
\begin{equation*}
std_{\nu}(\Poly_m(X_t)) = \Vert \Poly_m \Vert_{\nu} \quad \text{and}\quad std_{\nu}(\Nu_m(X_t)) = \Vert \Nu_m \Vert_{\nu}.
\end{equation*}
Then, using Lemma \ref{lemma:cov} with $f = \Nu_n$ and $f = \Poly_m$, respectively, we get, for any $t \geq s > 0$ and $n,m \in \NN$, that
\begin{eqnarray*}
\rho_{\nu}(\Poly_m(X_t), \Poly_n(X_s)) &=& \frac{e^{-\lambda_m(t-s)} \langle \Poly_n, \Poly_m \rangle_{\nu}}{\Vert \Poly_n \Vert_{\nu} \Vert \Poly_m \Vert_{\nu}} = e^{-\lambda_m(t-s)} c_{\nu}(n,m),\\
\rho_{\nu}(\Poly_m(X_t), \Nu_n(X_s)) &=& \frac{e^{-\lambda_m (t-s)} \delta_{mn}}{\Vert \Poly_m \Vert_{\nu} \ \Vert \Nu_m \Vert_{\nu}} = e^{-\lambda_m (t-s)} \kappa_{\nu}^{-1}(m) \delta_{mn},
\end{eqnarray*}
where we recall that for $m, n \in \NN$, $\kappa_{\nu}(m) = \Vert \Poly_m \Vert_{\nu} \Vert \Nu_m \Vert_{\nu}$ and $c_{\nu}(n,m) = \frac{\langle \Poly_n, \Poly_m \rangle_{\nu}}{\Vert \Poly_n \Vert_{\nu} \Vert \Poly_m \Vert_{\nu}}$. Then, by symmetry, it is easy to note that for any $t,s > 0$, we have
\begin{equation*}
\rho_{\nu}(\Poly_m(X_t), \Poly_n(X_s)) = e^{-\lambda_m(t-s)^+-\lambda_n(s-t)^+} c_{\nu}(n,m).
\end{equation*}
Finally, the Cauchy-Schwartz inequality entails that $|\langle \Poly_n, \Poly_m \rangle_{\nu}| \le \Vert \Poly_n \Vert_{\nu} \Vert \Poly_m \Vert_{\nu}$ and hence  $-1 \le c_{\nu}(n,m) \le 1$. Moreover, when $n = m$, we have that for any $n \in \NN$,
\begin{equation*}
c_{\nu}(n,n) =  \frac{\langle \Poly_n, \Poly_n \rangle_{\nu}}{\Vert \Poly_n \Vert_{\nu} \Vert \Poly_n \Vert_{\nu}} = \frac{\Vert \Poly_n \Vert_{\nu}^2}{\Vert \Poly_n \Vert_{\nu}^2}=1,
\end{equation*}
and this concludes the proof of Theorem \ref{thm:corr}. $\blacksquare$

\subsection{Proof of Lemma \ref{lemma:corr}}\label{sec:proof_lemma_corr}
The definitions of the covariance and correlation functions in \eqref{eq:def_cov} and \eqref{eq:def_corr} give that for any $t \geq 0$,
\begin{eqnarray}
\rho_{\nu}(f(X_t),g(X_t)) &=& \frac{\E_{\nu}[f(X_t)g(X_t)] - \E_{\nu}[f(X_t)]\E_{\nu}[g(X_t)]}{std_{\nu}(f(X_t))std_{\nu}(g(X_t))} \nonumber \\
&=& \frac{\nu P_t fg - \nu P_t f \cdot \nu P_t g}{\sqrt{\nu f^2 - (\nu f)^2} \cdot \sqrt{\nu g^2 - (\nu g)^2}}\nonumber \\
&=& \frac{\nu fg - \nu f \cdot \nu g}{\sqrt{\nu f^2 - (\nu f)^2} \cdot \sqrt{\nu g^2 - (\nu g)^2}}\nonumber \\
&=& \frac{\langle f,g \rangle_{\nu} - \nu f \cdot \nu g}{\sqrt{\nu f^2 - (\nu f)^2} \cdot \sqrt{\nu g^2 - (\nu g)^2}} \label{eq:corr_nu}
\end{eqnarray}
where in the third equality we used the fact that $\nu$ is an invariant measure for $P_t$. Next, for $n,m \in \NN$, taking $f = \Poly_m$ with $g = \Poly_n$ and $g = \Nu_n$ in \eqref{eq:corr_nu}, and using Lemma \ref{lemma:biorth} and Lemma \ref{lemma:std}, we get, for any $t \geq 0$,
\begin{eqnarray*}
\rho_{\nu}(\Poly_m(X_t),\Poly_n(X_t)) &=& \frac{\langle \Poly_m,\Poly_n \rangle_{\nu}}{\Vert \Poly_m \Vert_{\nu} \cdot \Vert \Poly_n \Vert_{\nu}} = c_{\nu}(n,m),\\
\rho_{\nu}(\Poly_m(X_t),\Nu_n(X_t)) &=& \frac{\langle \Poly_m,\Nu_n \rangle_{\nu}}{\Vert \Poly_m \Vert_{\nu} \cdot \Vert \Nu_n \Vert_{\nu}} = \kappa_{\nu}^{-1}(m)\delta_{mn}
\end{eqnarray*}
where we recall that for any $n \in \NN$, $\nu \Poly_n = \nu \Nu_n =0$. $\blacksquare$

\subsection{Proof of Proposition \ref{prop:long_tailed}}\label{sec:proof_long_tailed}
For a function $f$, we write $\mathcal{L}_f(q)=\int_0^{\infty} e^{-qz}f(z)dz$ and we use the same notation for the Laplace transform of a measure. Then, for any $\lambda>0$, denoting
\begin{equation}\label{eq:gamma_m}
U_{\lambda}(dw) = \int_{0}^{\infty}\Prob(\Tau_z \in dw)e^{-\lambda z}dz, \quad w \geq 0,
\end{equation}
the ${\lambda}$-potential measure of $\Tau$, we have, for any $q>0$,
\begin{eqnarray}
\mathcal{L}_{U_{\lambda}}(q) &=& \int_{0}^{\infty}e^{-qw}\int_{0}^{\infty}\Prob(\Tau_z \in dw)e^{-\lambda z}dz \nonumber \\
&=&\int_{0}^{\infty} e^{-\lambda z}  \int_{0}^{\infty}e^{-qw}\Prob(\Tau_z \in dw) \nonumber \\
&=& \int_{0}^{\infty} e^{-\lambda z} e^{-z \varphi(q)}dz = \frac{1}{\lambda +\varphi(q)}. \label{eq:gamma_laplace}
\end{eqnarray}
Next, as $\varphi(0)=0$, see~\eqref{eq:varphi1}, $\int_0^{\infty}U_{\lambda}(dw)=\frac{1}{\lambda}$. Thus, writing, for any $t\geq0$,  $\overline{U}_{\lambda}(t) = \lambda\int_{t}^{\infty}U_{\lambda}(dw)$ and changing the order of integration justified by an application of Tonelli's theorem, we get that for any $q>0$,
\begin{equation}
\mathcal{L}_{\overline{U}_{\lambda}}(q) = \frac{1}{q} - \lambda \frac{1}{q(\lambda+\varphi(q))} = \frac{\varphi(q)}{q(\lambda+\varphi(q))}.
\end{equation}
On the other hand, it is well known that the Laplace transform of $t\mapsto \eta_{t}(\lambda)$, where we recall that  $\eta_{t}(\lambda)= \int_0^{\infty}e^{-\lambda s}l_t(ds)$, takes the form, for any $q>0$,
\begin{equation}\label{eq:double_Laplace}
\mathcal{L}_{\eta_{\cdot}(\lambda)}(q) = \frac{\varphi(q)}{q(\lambda +\varphi(q))},
\end{equation}
see e.g.~Mijena and Nane \cite{mijena2014corr}.
Therefore, the injectivity of the Laplace transform implies that for any $t \geq 0$,
\begin{equation}\label{eq:gamma}
\eta_t(\lambda) = \lambda \int_t^{\infty}U_{\lambda}(dw).
\end{equation}
Here, writing $\widetilde{U}_{\lambda}(dw) = \lambda_m U_{\lambda}(dw)$, $w>0$, we have
\begin{equation}\label{eq:gamma_bar}
\eta_t(\lambda) = \int_t^{\infty}\widetilde{U}_{\lambda}(dr).
\end{equation}
Since $\eta_t(\lambda)$ is decreasing in $t$, $\eta_0(\lambda)=1$ and $\lim_{t \rightarrow \infty} \eta_t(\lambda)=0$, we deduce that $\eta_t(\lambda)$ is a tail of a probability measure, i.e.~there exists a random variable $X_{\lambda}$ such that $\eta_t(\lambda) = \Prob(X_{\lambda} > t)=\int_t^{\infty}\widetilde{U}_{\lambda}(dr)$, $t >0$. Next, assuming that $\varphi$ is strongly regularly varying at $0$, i.e.~$\varphi(q) \stackrel{0}{\sim} C q^{\alpha}$, $0<\alpha<1$ for some constant $C>0$, using \eqref{eq:gamma_laplace}, we obtain
\begin{equation*}
1 - \mathcal{L}_{\widetilde{U}_{\lambda}}(q) = 1 - \int_0^{\infty}e^{-qr}\widetilde{U}_{\lambda}(dr) = \frac{\varphi(q)}{\lambda+\varphi(q)} \stackrel{0}{\sim}  q^{\alpha}.
\end{equation*}
Then, it follows from a Tauberian theorem, see e.g.~\cite[Corollary 8.1.7]{bingham}, that equivalently we have
\begin{equation*}
\eta_t(\lambda) \stackrel{t \rightarrow \infty}{\sim} \frac{t^{-\alpha}}{\Gamma(1-\alpha)}.
\end{equation*}
Thus, for any $a>0$, we get that
\begin{equation*}
\lim_{t \rightarrow \infty} \frac{\eta_{\log(at)}(\lambda)}{\eta_{\log t}(\lambda)} = \lim_{t \rightarrow \infty} \frac{(\log a + \log t)^{-\alpha}}{(\log t)^{-\alpha}} = 1.
\end{equation*}
Therefore, $t \mapsto\eta_{\log(t)}(\lambda)$ is slowly varying at infinity, and thus $t \mapsto\eta_t(\lambda)$ is long-tailed, see e.g.~\cite[Lemma 2.15]{foss_long_tailed}, which completes the proof of the proposition. $\blacksquare$

\subsection{Proof of Theorem \ref{thm:t-ch}}\label{sec:proof_t_ch}
Writing  for $t,s \geq 0$, $H_{t,s}(u,v) = \Prob (L_t \le u, L_s \le v)$,  we have that the independence of $X$ and $L$ entails that for any  $m,n \in \NN$,
\begin{eqnarray*}
\rho_{\nu}(\Poly_m(X_{L_t}), \Poly_n(X_{L_s})) &=& \int_0^{\infty}\int_0^{\infty} \rho_{\nu}(\Poly_m(X_u), \Poly_n(X_v)) H_{t,s}(du,dv), \\
\rho_{\nu}(\Poly_m(X_{L_t}), \Nu_n(X_{L_s})) &=& \int_0^{\infty}\int_0^{\infty} \rho_{\nu}(\Poly_m(X_u), \Nu_n(X_v)) H_{t,s}(du,dv).
\end{eqnarray*}
Next, recalling that $c_{\nu}$ is symmetric, i.e.~$c_{\nu}(n,m) = c_{\nu}(m,n)$ for any $m,n \in \mathtt{N}$, Theorem \ref{thm:corr} gives that for any $u,v \geq 0$,
\begin{equation*}
\rho_{\nu}(\Poly_m(X_u), \Poly_n(X_v)) = c_{\nu}(n,m) \left( e^{-\lambda_m(u-v)} 1_{ \{u > v \} } + e^{-\lambda_n(v-u)} 1_{ \{ u \le v \} } \right)
\end{equation*}
and hence
\begin{eqnarray}
\rho_{\nu}(\Poly_m(X_{L_t}), \Poly_n(X_{L_s})) &=& \int_0^{\infty}\int_0^{\infty} c_{\nu}(n,m) \left( e^{-\lambda_m(u-v)} 1_{ \{u > v \} } + e^{-\lambda_n(v-u)} 1_{ \{u \le v \} } \right)H_{t,s}(du,dv) \nonumber \\
&=& c_{\nu}(n,m) \int_0^{\infty}\int_0^{\infty} \left( e^{-\lambda_m(u-v)} 1_{ \{u > v \} } + e^{-\lambda_n(v-u)} 1_{ \{u \le v \} }  \right)H_{t,s}(du,dv) \label{eq:inv_corr}.
\end{eqnarray}

Now, let $(u,v) \mapsto F(u,v)$ be a function of bounded variation such that $u \mapsto F(u,v)$ and $v \mapsto F(u,v)$ are also of bounded variation. Then, writing $F(du,v)$, $F(u,dv)$ and $F(du,dv)$, we mean the one dimensional measures generated by the sections $u \mapsto F(u,v)$, $v \mapsto F(u,v)$ and the two dimensional measure generated by $(u,v) \mapsto F(u,v)$ respectively. For such a function $F$, recall the bivariate integration by parts formula
\begin{eqnarray}
\int_0^{\infty}\int_0^{\infty} F(u,v)H_{t,s}(du,dv) &=& \int_0^{\infty}\int_0^{\infty} H_{t,s}([u,\infty]\times [v,\infty])F(du,dv) \nonumber \\
&+&  \int_0^{\infty}H_{t,s}([u,\infty]\times (0,\infty])F(du,0) \nonumber \\
&+& \int_0^{\infty} H_{t,s}((0,\infty]\times [v,\infty])F(0,dv) \nonumber \\
&+& F(0,0)H_{t,s}((0,\infty]\times (0,\infty]),  \label{eq:int_by_parts}
\end{eqnarray}
see e.g.~Gill et al.~\cite[Lemma 2.2]{gill1993inefficient}.
Let us apply this formula to
\[F(u,v) =  e^{-\lambda_m(u-v)} 1_{ \{u > v \} } + e^{-\lambda_n(v-u)} 1_{ \{u \le v \} }, \quad (u,v)\in \R_+^2, \]
which is clearly of bounded variation. Then, writing $\overline{H}_{t,s}(u,v) = \Prob (L_t \geq u, L_s \geq v)$  and $\overline{H}_{t}(u) = \Prob (L_t \geq u)$,
\begin{eqnarray}
\int_0^{\infty}\int_0^{\infty} F(u,v)H_{t,s}(du,dv) &=& \int_0^{\infty}\int_0^{\infty} \overline{H}_{t,s}(u,v) F(du,dv) + \int_0^{\infty} \overline{H}_{t}(u)F(du,0) \nonumber \\
&+& \int_0^{\infty} \overline{H}_{s}(v) F(0,dv) + 1, \label{eq:int_by_parts_F}
\end{eqnarray}
where we used that as $\Prob(L_t=0)=0$, $\Prob(L_t > 0)=1$ for all $t > 0$, $F(0,0)=1$ and $H$ is a distribution function.
Note that $F(du,v) = \left(-\lambda_m e^{-\lambda_m(u-v)} 1_{ \{u > v \} } + \lambda_n e^{-\lambda_n(v-u)} 1_{ \{u \le v \} } \right)du$ for all $v \geq 0$.
Thus, an integration by parts yields that
\begin{eqnarray*}
\int_0^{\infty} \overline{H}_{t}(u)F(du,0) &=& \int_0^{\infty}(1 - \Prob(L_t < u))(-\lambda_m e^{-\lambda_m u})du \\
&=& \left. e^{-\lambda_m u} \overline{H}_{t}(u)  \right\rvert_{0}^{\infty} + \int_0^{\infty} e^{-\lambda_m u} l_t(u)du
= \eta_{t}(\lambda_m) - 1,
\end{eqnarray*}
and similarly,
\begin{equation}
 \int_0^{\infty} \overline{H}_{s}(v) F(0,dv) =\eta_{s}(\lambda_n) - 1.
\end{equation}
Hence, \eqref{eq:inv_corr} reduces to
\begin{equation*}
c_{\nu}(n,m)\int_0^{\infty}\int_0^{\infty} F(u,v)H_{t,s}(du,dv) =c_{\nu}(n,m)( I(t,s) +  \eta_{t}(\lambda_m)+\eta_{s}(\lambda_n) - 1),
\end{equation*}
where we have set
\begin{equation}
I(t,s) = \int_0^{\infty}\int_0^{\infty} \overline{H}_{t,s}(u,v) F(du,dv).
\end{equation}
Then, observing that $I(s,t)=I(t,s)$, we assume, without loss of generality, that $s \le  t$ and we write $I(t,s) = I_1(t,s) + I_2(t,s) + I_3(t,s)$, where
\begin{eqnarray*}
I_1(t,s) &=& \int_0^{\infty}\int_0^v \overline{H}_{t,s}(u,v) F(du,dv), \:
I_2(t,s) = \int_0^{\infty} \int_{u = v} \overline{H}_{t,s}(u,v) F(du,dv),\\
I_3(t,s) &=& \int_0^{\infty}\int_v^{\infty} \overline{H}_{t,s}(u,v) F(du,dv).
\end{eqnarray*}
Then, as the inverse of the subordinator $\Tau$ is non-decreasing, $\overline{H}_{t,s}(u,v) = \Prob (L_t \geq u, L_s \geq v) = \Prob (L_s \geq v) = \overline{H}_{s}(v)$ for $u \le v$ and $F(du,dv) = -\lambda_n^2 \ e^{-\lambda_n(v-u)}du dv$ for $u < v$. Thus,
\begin{eqnarray*}
I_1(t,s) &=& \int_0^{\infty}\int_0^v \overline{H}_{s}(v) F(du,dv)\\
&=& -\lambda_n^2 \int_0^{\infty}\int_0^{v}\overline{H}_{s}(v)e^{\lambda_n(u-v)}dudv \\
&=& -\lambda_n \int_0^{\infty} \overline{H}_{s}(v) \left(1-e^{-\lambda_n v}\right)dv \\
&=&  -\lambda_n \int_0^{\infty} \overline{H}_{s}(v)dv +\lambda_n  \int_0^{\infty}e^{-\lambda_n v} \overline{H}_{s}(v)dv   \\
&=& -\lambda_n \E[L_s] + \lambda_n \int_0^{\infty}e^{- \lambda_n v} \Prob(L_s \geq v)dv \\
&=& -\lambda_n \E[L_s]  - \eta_{s}(\lambda_n) + 1,
\end{eqnarray*}
where in the last identity we have performed an integration by parts. We also note that
\begin{equation}\label{eq:U(0,s)}
\E[L_s]  = \int_0^{\infty} \overline{H}_{s}(v)dv = \int_0^{\infty} \Prob (L_s \geq v)dv = U(0,s).
\end{equation}
Next, writing simply $f_v(u)du = F(du,v)= \left(-\lambda_m \ e^{-\lambda_m(u-v)} 1_{ \{u > v \} } + \lambda_n\ e^{-\lambda_n(v-u)} 1_{ \{u \le v \} } \right)du$, we remark that the mapping $u \mapsto f_v(u)$ has a jump of size $(\lambda_m+\lambda_n)$ at the point $u=v$. Then,
\begin{equation*}
I_2(t,s) = \int_0^{\infty} \int_{u = v} \overline{H}_{s}(v) F(du,dv) = (\lambda_m+\lambda_n)\int_0^{\infty} \overline{H}_{s}(v)dv = (\lambda_m+\lambda_n)\E[L_s].
\end{equation*}
Finally, as $F(du,dv) = -\lambda_m^2 e^{-\lambda_m(u-v)}du dv$ for $u > v$, we deduce that
\begin{equation*}
I_3(t,s) = -\lambda_m^2 \int_0^{\infty} \overline{H}_{t,s}(u,v) \int_{v}^{\infty}e^{-\lambda_m(u-v)}dudv,
\end{equation*}
and we proceed by computing the joint tail distribution of the pair $(L_t, L_s)$, that is $\overline{H}_{t,s}(u,v) = \Prob (L_t \geq u, L_s \geq v)$.
Note that since $L$ is the inverse of $\Tau$, then $\{L_t \geq u \} = \{\Tau_u \le  t \}$, and thus $\Prob (L_t \geq u, L_s \geq v)=\Prob (\Tau_u \le t, \Tau_v \le s)$. Now, since as a L\'evy process $\Tau$ has stationary and independent increments, it follows, recalling that $s \le t$,
\begin{eqnarray*}
\overline{H}_{t,s}(u,v) = \Prob (\Tau_u \le t, \Tau_v \le s) &=& \Prob ( (\Tau_u-\Tau_v)+\Tau_v \le t, \Tau_v \le s) \\
&=& \int_0^{s} \Prob(\Tau_v \in dr) \int_0^{t-r} \Prob(\Tau_{u - v} \in dw).
\end{eqnarray*}
Using Fubini's theorem and performing the change of variable $z = u-v$, we get
\begin{eqnarray}
I_3(t,s) &=& -\lambda_m^2 \int_0^{\infty} \int_0^{s} \Prob(\Tau_v \in dr) \int_0^{t-r} \Prob(\Tau_{u - v} \in dw) \int_{v}^{\infty}e^{-\lambda_m(u-v)}dudv \nonumber \\
&=&  -\lambda_m^2 \int_0^{s} \int_0^{t-r} \int_{0}^{\infty}e^{-\lambda_m z}\Prob(\Tau_{z} \in dw)dz \int_0^{\infty} \Prob(\Tau_v \in dr) dv \nonumber \\
&=& -\lambda_m^2 \int_0^{s} \int_0^{t-r} \int_{0}^{\infty}e^{-\lambda_m z}\Prob(\Tau_{z} \in dw)dz U(dr) \label{eq:I3}
\end{eqnarray}
where in the last step, from the definition of the renewal measure, we have used that $ \int_0^{\infty} \Prob(\Tau_v \in dr)dv = U(dr)$.
Now, taking $\lambda=\lambda_m$ in ~\eqref{eq:gamma_m} and~\eqref{eq:gamma} in the proof of Proposition~\eqref{prop:long_tailed}, we have $U_{\lambda_m}(dw) = \int_{0}^{\infty}\Prob(\Tau_z \in dw)e^{-\lambda_m z}dz$, $w > 0$ and $\eta_t(\lambda_m) = \lambda_m \int_t^{\infty}U_{\lambda_m}(dw)$.
Hence, using~\eqref{eq:U(0,s)}, the expression of $I_3(t,s)$ in \eqref{eq:I3} reduces to
\begin{eqnarray*}
I_3(t,s) &=&  -\lambda_m^2 \int_0^{s} \int_0^{t-r} U_{\lambda_m}(dw) U(dr) \\
&=& \lambda_m  \int_0^{s}\eta_{t-r }(\lambda_m)U(dr) - \lambda_m \int_0^{s}U(dr)\\
&=& \lambda_m  \int_0^{s}\eta_{t-r }(\lambda_m)U(dr) - \lambda_m \E[L_s].
\end{eqnarray*}
Finally, putting all pieces together, we obtain that for $t \geq s >0$,
\begin{eqnarray}
\rho_{\nu}(\Poly_m(X_{L_t}), \Poly_n(X_{L_s})) &=& c_{\nu}(n,m) ( I_1(t,s) + I_2(t,s) + I_3(t,s) + \eta_{t}(\lambda_m) + \eta_{s}(\lambda_n) - 1 ) \nonumber \\
&=&c_{\nu}(n,m) \Bigl( -\lambda_n \E[L_s]  - \eta_{s}(\lambda_n) + 1 + (\lambda_m +\lambda_n )\E[L_s] \nonumber \\
&+& \lambda_m \int_0^{s}\eta_{t-r}(\lambda_m)U(dr) - \lambda_m \E[L_s] + \eta_{t}(\lambda_m) + \eta_{s}(\lambda_n) - 1 \Bigr) \nonumber \\
&=&c_{\nu}(n,m) \Bigl(  \lambda_m \int_0^{s}\eta_{t-r}(\lambda_m)U(dr) + \eta_{t}(\lambda_m) \Bigr) \label{eq:rho_result}
\end{eqnarray}
which provides the claim~\eqref{eq:corr_t_ch}.
We proceed by studying the spectral projections correlation structure of $\rho_{\nu}(\Poly_m(X_{L_t}), \Nu_n(X_{L_s}))$ for $t \geq s > 0$ and $m,n \in \NN$. Note that, for any $u,v \geq 0$,
\begin{equation}\label{eq:F1-2}
\rho_{\nu}(\Poly_m(X_u), \Nu_n(X_v)) =  F_1(u,v) + F_2(u,v),
\end{equation}
where we have written, see Theorem \ref{thm:corr},
\begin{eqnarray*}
F_1(u,v) &=& \rho_{\nu}(\Poly_m(X_u), \Nu_n(X_v)) 1_{ \{u \geq v \} } =\kappa_{\nu}^{-1}(m) \delta_{mn}   e^{-\lambda_m(u-v)} 1_{ \{u \geq v \} },\\
F_2(u,v) &=& \rho_{\nu}(\Poly_m(X_u), \Nu_n(X_v)) 1_{ \{ u < v \} }.
\end{eqnarray*}
Then, for any $t\geq s >0$ and $m,n \in \NN$, we have
\begin{equation}\label{eq:rho}
\rho_{\nu}(\Poly_m(X_{L_t}), \Nu_n(X_{L_s})) = \int_0^{\infty}\int_0^{\infty} F_1(u,v)H_{t,s}(du,dv) + \int_0^{\infty}\int_0^{\infty} F_2(u,v)H_{t,s}(du,dv).
\end{equation}
Now, recalling the bivariate integration by parts formula \eqref{eq:int_by_parts}, one has
\begin{eqnarray*}
\int_0^{\infty}\int_0^{\infty} F_1(u,v)H_{t,s}(du,dv) &=&  \int_0^{\infty}\int_0^{\infty} \overline{H}_{t,s}(u,v) F_1(du,dv) + \int_0^{\infty} \overline{H}_{t}(u)F_1(du,0)\\
&+& \kappa_{\nu}^{-1}(m) \delta_{mn},
\end{eqnarray*}
where we used that $ \int_0^{\infty} \overline{H}_{s}(v) F_1(0,dv) = 0$ and $F_1(0,0) = \kappa_{\nu}^{-1}(m) \delta_{mn}$. Now, following the same pattern as in the proof of the first part of Theorem~\ref{thm:corr} above, and since on $\{u<v\}$, $F_1(du,dv)=0$, one gets
\begin{eqnarray*}
\int_0^{\infty}\int_0^{\infty} F_1(u,v)H_{t,s}(du,dv) &=& \int_0^{\infty} \int_{u = v} \overline{H}_{t,s}(u,v) F_1(du,dv) +\int_0^{\infty}\int_{v}^{\infty} \overline{H}_{t,s}(u,v) F_1(du,dv) \\
&+& \int_0^{\infty} \overline{H}_{t}(u)F_1(du,0) + \kappa_{\nu}^{-1}(m) \delta_{mn} \\
&=& \kappa_{\nu}^{-1}(m) \delta_{mn} \lambda_m\E[L_s] \\ &+& \kappa_{\nu}^{-1}(m) \delta_{mn} \left( \lambda_m \int_0^s \eta_{t-r}(\lambda_m)U(dr)-\lambda_m \E[L_s]\right) \\
&+& \kappa_{\nu}^{-1}(m) \delta_{mn} \left( \eta_{t}(\lambda_m) - 1 \right)\\
&=& \kappa_{\nu}^{-1}(m) \delta_{mn} \left( \lambda_m \int_0^s \eta_{t-r}(\lambda_m)U(dr)+\eta_t (\lambda_m)\right).
\end{eqnarray*}
Next, we turn to the computation of the second integral on the right-hand side of \eqref{eq:rho}. As the functions $(u,v) \mapsto F_2(u,v)$, $u \mapsto F_2(u,v)$ and $v \mapsto F_2(u,v)$ are of bounded variation since by \eqref{eq:F1-2}, $F_2(u,v)$ is a difference of two functions of bounded variation, then, by means of the bivariate integration by parts formula \eqref{eq:int_by_parts}, we get
\begin{equation}\label{eq:F2}
\int_0^{\infty}\int_0^{\infty} F_2(u,v)H_{t,s}(du,dv) = \int_0^{\infty}\int_0^{\infty} \overline{H}_{t,s}(u,v) F_2(du,dv) + \int_0^{\infty} \overline{H}_{s}(v) F_2(0,dv),
\end{equation}
where we used that  $\int_0^{\infty} \overline{H}_{t}(u) F_2(du,0) = 0$ and  $F_2(0,0) = 0$. Now, since on $\{u > v\}$, $F_2(du,dv)=0$, then, for $t \geq s$, \eqref{eq:F2} reduces to
\begin{eqnarray*}
\int_0^{\infty}\int_0^{\infty} F_2(u,v)H_{t,s}(du,dv) &=& \int_0^{\infty} \int_0^v \overline{H}_{t,s}(u,v) F_2(du,dv) \\
&+& \int_0^{\infty} \int_{u = v} \overline{H}_{t,s}(u,v) F_2(du,dv) + \int_0^{\infty} \overline{H}_{s}(v) F_2(0,dv) \\
&=& \int_0^{\infty} \int_0^v \overline{H}_{s}(v) F_2(du,dv) + \int_0^{\infty}  \int_{u = v} \overline{H}_{s}(v) F_2(du,dv) \\
&+& \int_0^{\infty} \overline{H}_{s}(v) F_2(0,dv).
\end{eqnarray*}
Thus, $\int_0^{\infty}\int_0^{\infty} F_2(u,v)H_{t,s}(du,dv)$ does not depend on $t$. On the other hand, taking $t=s$ in \eqref{eq:rho}, we get, for any $s \geq 0$,
\begin{equation*}
\kappa_{\nu}^{-1}(m) \delta_{mn} = \kappa_{\nu}^{-1}(m) \delta_{mn} + \int_0^{\infty}\int_0^{\infty} F_2(u,v)H_{s,s}(du,dv),
\end{equation*}
wher we used that by taking $t=s$ in \eqref{eq:rho_result},  Lemma~\ref{lemma:corr} yields that for any $t \geq 0$,
\begin{equation*}
\lambda_m \int_0^{t}\eta_{t-r}(\lambda_m)U(dr) + \eta_{t}(\lambda_m) = 1.
\end{equation*}
This can also be independently proven as in Remark \ref{rem:1}.
Hence, for any $s \geq 0$,
\begin{equation*}
\int_0^{\infty}\int_0^{\infty} F_2(u,v)H_{s,s}(du,dv) = 0,
\end{equation*}
and we deduce that for any $t \geq s> 0$,
\begin{equation*}
\int_0^{\infty}\int_0^{\infty} F_2(u,v)H_{t,s}(du,dv) = \int_0^{\infty}\int_0^{\infty} F_2(u,v)H_{s,s}(du,dv) = 0.
\end{equation*}
Therefore, putting pieces together, \eqref{eq:rho} reduces to
\begin{equation*}
\rho_{\nu}(\Poly_m(X_{L_t}), \Nu_n(X_{L_s})) = \kappa_{\nu}^{-1}(m) \delta_{mn} \left( \lambda_m \int_0^s \eta_{t-r}(\lambda_m)U(dr)+\eta_t (\lambda_m)\right).
\end{equation*}
Now we are ready to study the right-hand side of \eqref{eq:corr_t_ch}  and \eqref{eq:corr_t_ch_nu} for large $t$ when $s>0$ is fixed under the assumption that $\lim_{t \rightarrow \infty}\frac{\eta_{t-s}(\lambda_m)}{\eta_{t}(\lambda_m)} = 1$. Since $t \mapsto \eta_{t}(\lambda_m)$ is decreasing on $\R^+$, we have
\begin{eqnarray*}
\int_0^{s}\eta_{t-r}(\lambda_m)U(dr) \geq   \eta_{t}(\lambda_m) U(0,s) = \eta_{t}(\lambda_m) \E[L_s]
\end{eqnarray*}
and
\begin{eqnarray*}
\int_0^{s}\eta_{t-r}(\lambda_m)U(dr) \le  \eta_{t-s}(\lambda_m)U(0,s) \le \eta_{t-s}(\lambda_m)\E[L_s].
\end{eqnarray*}
Consequently,
\begin{eqnarray*}
c_{\nu}(n,m) \eta_t(\lambda_m)(\lambda_m \E[L_s]+1) \le &\rho_{\nu}(\Poly_m(X_{L_t}), \Poly_n(X_{L_s}))& \le c_{\nu}(n,m) \eta_{t-s}(\lambda_m)(\lambda_m \E[L_s]+1) ,\\
\kappa_{\nu}^{-1}(m)\delta_{mn} \eta_t(\lambda_m)(\lambda_m \E[L_s]+1) \le &\rho_{\nu}(\Poly_m(X_{L_t}), \Nu_n(X_{L_s}))& \le \kappa_{\nu}^{-1}(m)\delta_{mn} \eta_{t-s}(\lambda_m)(\lambda_m \E[L_s]+1).
\end{eqnarray*}
Now, if for a fixed $s > 0$, there exists a constant $C=C(s,\lambda_m)>0$ such that  $\overline{\lim}_{t \rightarrow \infty}\frac{\eta_{t-s}(\lambda_m)}{\eta_t(\lambda_m)} =C$, then there exists $t_0 >0$ such that for $t \geq t_0$, $\frac{\eta_{t-s}(\lambda_m)}{\eta_t(\lambda_m)} \le C$, and thus
\begin{eqnarray*}
\rho_{\nu}(\Poly_m(X_{L_t}), \Poly_n(X_{L_s})) &\stackrel{t_0}{\asymp} & c_{\nu}(n,m) \eta_{t}(\lambda_m)(\lambda_m \E[L_s]+1) ,\\
\rho_{\nu}(\Poly_m(X_{L_t}), \Nu_n(X_{L_s}))& \stackrel{t_0}{\asymp} & \kappa_{\nu}^{-1}(m)\delta_{mn} \eta_{t}(\lambda_m)(\lambda_m \E[L_s]+1).
\end{eqnarray*}
In particular, if for a fixed $s>0$, $\lim_{t \rightarrow \infty}\frac{\eta_{t-s}(\lambda_m)}{\eta_{t}(\lambda_m)} \equiv 1$, we have
\begin{eqnarray*}
\rho_{\nu}(\Poly_m(X_{L_t}), \Poly_n(X_{L_s})) &\stackrel{t \rightarrow \infty}{\sim} &  c_{\nu}(n,m)\eta_{t}(\lambda_m)(\lambda_m \E[L_s]+1), \\
\rho_{\nu}(\Poly_m(X_{L_t}), \Nu_n(X_{L_s})) &\stackrel{t \rightarrow \infty}{\sim} & \kappa_{\nu}^{-1}(m) \delta_{mn} \eta_{t}(\lambda_m)(\lambda_m \E[L_s]+1),
\end{eqnarray*}
and this concludes the proof of Theorem~\ref{thm:t-ch}. $\blacksquare$


\begin{remark}\label{rem:1}
\normalfont
One can easily check that for any $n,m \in \NN$, when $t=s$ in \eqref{eq:corr_t_ch}, we have for any $t \geq 0$,
\begin{equation*}
\rho_{\nu}(\Poly_m(X_{L_t}), \Poly_n(X_{L_t})) = c_{\nu}(n,m),
\end{equation*}
i.e.~
\begin{equation*}
\lambda_m \int_0^{t}\eta_{t-r}(\lambda_m)U(dr) + \eta_{t}(\lambda_m) = 1.
\end{equation*}
Indeed, let us plug in $t=s$ in \eqref{eq:corr_t_ch}. Then, noting that the  convolution, we get that, for any $q > 0$,
\begin{eqnarray*}
\mathcal{L}_{\int_0^{\cdot}\eta_{\cdot-r}(\lambda_m)U(dr) }(q) &=& \mathcal{L}_{\eta_{\cdot}({\lambda_m})}(q) \mathcal{L}_{U}(q) \\
&=& \frac{\varphi(q)}{q(\lambda_m+\varphi(q))} \frac{1}{\varphi(q)}\\
&=& \frac{1}{q(\lambda_m+\varphi(q))}.
\end{eqnarray*}
Next, using \eqref{eq:double_Laplace}, one has
\begin{equation*}
\mathcal{L}_{\lambda_m \int_0^{\cdot}\eta_{\cdot-r}(\lambda_m)U(dr) + \eta_{\cdot}(\lambda_m)}(q) = \frac{\lambda_m}{q(\lambda_m+\varphi(q))} + \frac{\varphi(q)}{q(\lambda_m+\varphi(q))} = \frac{1}{q}.
\end{equation*}
Thus, by the injectivity of the Laplace transform we conclude that
\begin{equation*}
\lambda_m \int_0^{t}\eta_{t-r}(\lambda_m)U(dr) + \eta_{t}(\lambda_m) = 1.
\end{equation*}
\end{remark}

\bibliographystyle{plain}

\end{document}